\DeclareMathOperator{\join}{Join}
\newtheorem{theorem}{Theorem}
\newtheorem{proposition}{Proposition}
\newtheorem{corollary}{Corollary}
\newtheorem{lemma}{Lemma}
\newtheorem{remark}{Remark}
\newtheorem{example}{Example}
\newtheorem{definition}{Definition}
\begin{document}
\title[Compactification of a diagonal action]{Compactification of a diagonal action on the product of $\text{CAT}(-1)$ spaces}
\author{Teresa García}
\address{Departament de Matemàtiques,
Universitat Autònoma de Barcelona,
E-08193 Bellaterra, Spain} \email{tggalvez@mat.uab.cat}
\thanks{This work has been supported by grants BES-2013-06571 (MINECO) and MTM2015-66165-P (MINECO/FEDER). The author thanks her advisor, Joan Porti, for his unvaluable advice.}
\begin{abstract}
Let $X$ be a proper, non-compact $\text{CAT(-1)}$ space, and $\Gamma$ a discrete cocompact subgroup of the isometries of $X$. We compactify the diagonal action of $ \Gamma$ on $X\times X$ considering a domain of the horofunction boundary with respect to the maximum metric.
\end{abstract}
\maketitle
\section{Introduction}
\label{introduction}

Let $(X, d_X)$ be a proper, non-compact $\text{CAT}(-1)$ space. As examples of such spaces one can think of complete simply connected Riemannian manifolds of negative sectional curvature, for instance hyperbolic space, and of metric trees. For a review on $\text{CAT}(-1)$ spaces one can consult the first section of \cite{Bourdon}. Here, we consider a discrete and cocompact subgroup $\Gamma$ of the isometries of $(X, d_X)$, and the diagonal action of $\Gamma$ on the product space $X\times X$,
\begin{align*}
\Gamma\times X\times X &\rightarrow X\times X\\
(\gamma, x, y) &\mapsto (\gamma x, \gamma y).
\end{align*}
This action is not cocompact  but one can try to attach to $X\times X$ a set $\Omega$ of ideal boundary points such that the action on $X\times X \cup \Omega$ is cocompact. 

The space $(X\times X, d_{X\times X})$, where $d_{X\times X}$ denotes the standard product metric, is a proper metric space and hence, it can be compactified by means of horofunctions, see for instance section 3 in \cite{BGS}. Since it is a $\text{CAT}(0)$ space, the horofunctions are in fact Busemann functions and the horofunction boundary coincides with the boundary by rays \cite[Prop. 2.5]{Ballmann}. The action of $\Gamma$ extends continuously to an action by homeomorphisms on the boundary of the compactification, but it is not clear if there is a subset of the boundary that can be a good candidate for $\Omega$. For this reason we introduce the maximum metric in $X\times X$, defined by
$$d_{\max}((x,y),(x',y'))= \max\{d_X(x,x'), d_X(y,y')\}$$
for any $(x,y)$, $(x',y')$ in $X\times X$. The space $(X\times X, d_{\max})$ is also a proper metric space and therefore, it can also be compactified by horofunctions. However, it is not a $\text{CAT}(0)$ space, since the geodesic segment joining two points is not unique. The group $\Gamma$ acts on $(X\times X, d_{\max})$ by isometries and the action can  be extended to an action by homeomorphisms on the ideal boundary, which we denote by $\partial_{\infty}^{\max}(X\times X)$. The compactification with respect to the metric $d_{\max}$ turns out to be more adapted to our problem. The main result of this work is the fact that we can find a subset of $\partial_{\infty}^{\max} (X\times X)$ where the action of $\Gamma$ is properly discotinuous, and which compactifies the action of $\Gamma$ on $X\times X$:

\begin{theorem} Let $X$ be a proper, non-compact $\text{CAT}(-1)$ space and $\Gamma$ a group of isometries of $X$ acting in a properly discontinuous and cocompact way on $X$.  There exist an open set $\Omega\subset \partial_{\infty}^{\max} (X\times X)$ such that the diagonal action of $\Gamma$ on $X\times X\cup \Omega$ is properly discontinuous and cocompact.
\end{theorem}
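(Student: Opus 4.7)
The plan is to give an explicit description of the horofunction boundary $\partial_{\infty}^{\max}(X\times X)$, to identify inside it the open set $\Omega$ corresponding to ``transverse'' pairs of ideal points, and then reduce proper discontinuity and cocompactness of the $\Gamma$-action on $X\times X\cup\Omega$ to the classical Hopf-type parametrization of the geodesic flow on $\Gamma\backslash X$.

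First I would work out the horofunctions of $(X\times X,d_{\max})$ by direct computation. Fix a basepoint $(o_{1},o_{2})$ and a sequence $(x_{n},y_{n})$ with $d_{\max}((o_{1},o_{2}),(x_{n},y_{n}))\to\infty$. Writing
\[
d_{\max}((x,y),(x_{n},y_{n}))-d_{\max}((o_{1},o_{2}),(x_{n},y_{n}))=\max(d_{X}(x,x_{n}),d_{X}(y,y_{n}))-\max(d_{X}(o_{1},x_{n}),d_{X}(o_{2},y_{n})),
\]
and passing to a subsequence, one may assume $x_{n}$ either stays bounded or converges to some $\xi\in\partial_{\infty}X$, similarly $y_{n}$ either stays bounded or converges to some $\eta\in\partial_{\infty}X$, and the difference $d_{X}(o_{1},x_{n})-d_{X}(o_{2},y_{n})$ tends to some $t\in[-\infty,+\infty]$. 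Using that Busemann functions of a CAT$(-1)$ space are limits of normalized distance functions, one obtains in the generic case ($\xi,\eta\in\partial_{\infty}X$, $t\in\mathbb{R}$) a horofunction of the explicit form
\[
h_{\xi,\eta,t}(x,y)=\max\bigl(b_{\xi}(x)+t,\,b_{\eta}(y)\bigr)
\]
up to an additive constant; the remaining degenerate cases ($x_{n}$ bounded, $y_{n}$ bounded, or $t=\pm\infty$) produce horofunctions depending on only one of the two variables.

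Second, I would set
\[
\Omega=\{h_{\xi,\eta,t}:\xi,\eta\in\partial_{\infty}X,\ \xi\neq\eta,\ t\in\mathbb{R}\}\subset\partial_{\infty}^{\max}(X\times X)
\]
and verify that it is open. This reduces, via the parametrization above, to the openness of $\{\xi\neq\eta\}$ in $(\partial_{\infty}X)^{2}$ together with the continuity of $(\xi,\eta,t)\mapsto h_{\xi,\eta,t}$ in the topology of uniform convergence on compacts. The isometric action of $\Gamma$ preserves $\Omega$ since $\gamma\cdot h_{\xi,\eta,t}=h_{\gamma\xi,\gamma\eta,t'}$ for a value $t'$ corrected by Busemann cocycles, and the set $\{\xi\neq\eta\}$ is itself $\Gamma$-invariant. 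The bijection $h_{\xi,\eta,t}\leftrightarrow(\xi,\eta,t)$ then identifies the $\Gamma$-action on $\Omega$ with the action of $\Gamma$ on $((\partial_{\infty}X)^{2}\setminus\Delta)\times\mathbb{R}$, where $\Delta$ denotes the diagonal. This is the model space for the geodesic flow of $\Gamma\backslash X$, and cocompactness of $\Gamma\curvearrowright X$ gives, by Hopf-type arguments, proper discontinuity and cocompactness of the action on this quotient. To extend these properties to $X\times X\cup\Omega$ I would show that any sequence $(x_{n},y_{n})\in X\times X$ whose $\Gamma$-translates accumulate on a point of $\Omega$ is, up to the $\Gamma$-action, contained in a bounded region: in the transverse case $\xi\neq\eta$, the two coordinates $x_{n}$ and $y_{n}$ are forced along essentially ``parallel'' geodesic rays with a controlled offset $t$, a configuration that a cocompact $\Gamma$ can always translate back into a fundamental domain.

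The main obstacle is making the topological interface between $X\times X$ and $\Omega$ behave correctly: since $d_{\max}$ is not CAT$(0)$, there is no canonical geodesic segment joining a point in $X\times X$ to a point in $\Omega$, and horofunction convergence must be controlled by hand. I expect this to rest on sharp $\text{CAT}(-1)$ estimates relating $d_{X}(x,y)$, the Gromov product $(x\,|\,y)_{o}$, and the Busemann functions $b_{\xi},b_{\eta}$, used to verify that for $\xi\neq\eta$ any sequence $(x_{n},y_{n})$ whose horofunctions approach $h_{\xi,\eta,t}$ actually has $x_{n}\to\xi$ and $y_{n}\to\eta$ at compatible speeds. This is the step where the strictly negative curvature of $\text{CAT}(-1)$, as opposed to merely $\text{CAT}(0)$, genuinely enters, through the exponential divergence of geodesics with distinct ideal endpoints.
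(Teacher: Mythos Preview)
Your description of the horofunction boundary, the identification of the regular part with $\partial_{\infty}X\times\partial_{\infty}X\times\mathbb{R}$, the definition of $\Omega$, and the use of the Hopf parametrization to obtain proper discontinuity and cocompactness of $\Gamma$ on $\Omega$ alone all match the paper essentially verbatim (Propositions~\ref{boundary points}--\ref{homeo regular}, Theorem~\ref{ideal homeomorphism}, Proposition~\ref{action ideal domain}).

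The gap is exactly where you locate ``the main obstacle'': you have no concrete mechanism for passing from proper discontinuity and cocompactness on $X\times X$ and on $\Omega$ separately to the same properties on $X\times X\cup\Omega$. Your suggestion of showing that a sequence $(x_n,y_n)$ accumulating on $\Omega$ can be $\Gamma$-translated into a bounded region is vague, and for proper discontinuity (where one must control compacts that straddle the interface $X\times X\,/\,\Omega$) you offer nothing at all. The paper resolves this with a single clean tool you are missing: the $\Gamma$-equivariant continuous retraction
\[
\tilde\rho:\; X\times X\cup\Omega\;\longrightarrow\;\Delta\cong X,
\]
sending $(x,y)$ to the midpoint of the segment $xy$ (equivalently, the nearest point of the diagonal in $d_{\max}$) and sending a parametrized geodesic $g\in G\cong\Omega$ to $g(0)$. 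Continuity of $\tilde\rho$ across the interface (Proposition~\ref{continuous retraction}) is precisely the ``sharp $\mathrm{CAT}(-1)$ estimate'' you anticipate: one shows via Gromov products that if $(x_n,y_n)\to h_{\xi,\eta,t}$ with $\xi\neq\eta$ then the midpoints $m_n$ stay bounded and converge to the point on the geodesic $\xi\eta$ determined by $t$. Once $\tilde\rho$ is in hand, both conclusions are immediate: proper discontinuity because $\gamma K\cap K\neq\emptyset$ forces $\gamma\tilde\rho(K)\cap\tilde\rho(K)\neq\emptyset$ with $\tilde\rho(K)\subset X$ compact; cocompactness because $\tilde\rho^{-1}(K_\Delta)$ is compact for $K\subset X$ a fundamental domain (Lemma~\ref{compact domain}). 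Your proposed ad hoc argument would, if carried out, amount to rediscovering this midpoint map; you should make it explicit.
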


The ideal boundary $\partial_{\infty}^{\max}(X\times X)$ can be interpreted in terms of the ideal boundary of $X$, which we denote by $\partial_{\infty}X$. We will see that $\partial_{\infty}^{\max}(X\times X)$ splits in a singular part, which is naturally homeomorphic to $\partial_{\infty}X\sqcup\partial_{\infty}X$, and a regular part, which is homeomorphic, also in a natural way, to $\partial_{\infty} X\times \partial_{\infty} X\times \mathbb{R}$. If we denote by $D$ the diagonal of $\partial_{\infty} X\times \partial_{\infty}X$, the set $\Omega$ is just the subset of the regular part of the boundary which corresponds, under the homeomorphism, to the set  $((\partial_{\infty} X \times \partial_{\infty}X)\setminus D) \times \mathbb{R}$. 

The set $\Omega$ is naturally homeomorphic to the set $G$ of parametrized geodesics in $X$ equipped with the topology of uniform convergence on compact sets. The identification gives more geometrical insight to the solution of the problem. We consider the diagonal $\Delta$ of $X\times X$ and the nearest point retraction that sends each point of $X\times X$ to its nearest point in $\Delta$. This map can be extended in a continuous way to $G$, by sending each geodesic $g$ to the point $(g(0), g(0))$ in $\Delta$. We use the continuous extension of the nearest point retraction to show that the action of $\Gamma$ on $\Omega$ is properly discontinuous and cocompact. 

\section{Definitions and notations}

In this section we review very briefly the main concepts that appear through the paper.

For us, a parametrized geodesic (or simply a geodesic) is an isometric embedding $g:\mathbb{R} \rightarrow X$. We call the image of a geodesic a geodesic line. A ray is an isometric embedding $r: [0, \infty)\rightarrow X$. And similarly, a geodesic segment joining two points $x$ and $y$ is an isometric embedding $xy:[a,b]\rightarrow X$ such that $xy(a)=x$ and $xy(b)=y$. We will make no distinction between a ray or a geodesic segment and their corresponding images. 

A geodesic space is a metric space such that any two points can be joined by a geodesic segment. A metric space is proper if its closed balls are compact. A geodesic metric space is proper if and only if it is complete and locally compact \cite[Thm. 1.10]{Gromov}.

Let $X$ be a metric space and $\Delta$ a geodesic triangle in $X$. A comparison triangle $\bar{\Delta}_{\mathbb{H}^2}$ in the  hyperbolic plane $\mathbb{H}^2$ (or $\bar{\Delta}_{\mathbb{E}^2}$ in the euclidean plane $\mathbb{E}^2$)  is a geodesic triangle in  $\mathbb{H}^2$  (or in $\mathbb{E}^2$) with sides of the same length than those of $\Delta$. The space $X$ is $\text{CAT}(-1)$ (respectively, $\text{CAT}(0)$) if for any triangle $\Delta$, for any $x, y$ in $\Delta$ and their comparison points $\bar{x}, \bar{y}$ in $\bar{\Delta}_{\mathbb{H}^2}$ (respectively, $\bar{\Delta}_{\mathbb{E}^2}$) satisfy:
$$d(x,y) \leq d(\bar{x}, \bar{y}).$$
A $\text{CAT}(-1)$ space is in particular $\text{CAT}(0)$, \cite[Part II, Thm. 1.12]{BH}.

The action of $\Gamma$ on $X$  is properly discontinuous if for every compact $K\subset X$ the set $K\cap \gamma K$ is non empty for finitely many $\gamma\in \Gamma$. The action is cocompact if there exists a compact $K\subset X$ such that $X=\Gamma K$.

Given a metric space $(X,d)$, the Gromov product of two points $x$ and $y$ in $X$ with respect to a third point $z$ in $X$  is defined as:
$$(x|y)_z= \frac{1}{2}(d(z,y) + d(z,x) - d(x,y)).$$
In a $\text{CAT}(-1)$ space the product can be extended to points at infinity. Observe that a geodesic $g$ determines two rays, one with the same orientation than the geodesic itself and the other with the reversed orientation. The equivalence classes of these rays are the ideal endpoints of the geodesic, we denote them by $g(+\infty)$ and $g(-\infty)$ respectively. The Gromov product of the points $g(+\infty)$ and $g(-\infty)$ with respect to a base point $o$, is given by:
$$(g(+\infty)|g(-\infty))_o= \frac{1}{2}(\beta^ {g(0)}_{g(+\infty)} (o) + \beta^{g(0)}_{g(-\infty)}(o)),$$
where the functions $\beta^ {g(0)}_{g(+\infty)}$, $\beta^ {g(0)}_{g(-\infty)}$ will be defined in section $3$. The definition of the Gromov product is independent of the parametrization of the geodesic line $L$ with ideal endpoints $g(+\infty)$ and $g(-\infty)$. Observe that if $o\in L$ then it is $0$. (See for instance \cite[Ch. 2]{Ghys} and \cite[Sect. 2]{Bourdon}, for more about Gromov products). 

\section{Horofunction boundary of $(X \times X, d_{\max})$}
\label{compactification}
As we have stated in the introduction, let $(X, d_X)$ be a proper, non-compact $\text{CAT}(-1)$ space and consider the product space $X\times X$ together with the maximum metric $d_{\max}$:
$$d_{\max}((x,y),(x',y'))= \max\{d_X(x,x'), d_X(y,y')\}$$
for any $(x,y)$, $(x',y')$ in $X\times X$.

The compactification via horofunctions of a proper metric space is explained in detail in \cite[Ch.2]{Ballmann} and \cite[Part II, Sect.8]{BH}.  The idea, which is in fact valid for any complete locally compact metric space, is due to Gromov \cite[Sect.3]{BGS}. It consists on embedding the space, in our case $(X\times X, d_{\max})$, into the space $C_*$ of its continuous functions (with the topology of uniform convergence on compact sets) modulo additive constant, via the map
 \begin{align*}
\iota: X\times X &\rightarrow C_*\\
 (x,y) &\mapsto [d_{\max}((x,y), \cdot)]
\end{align*}
 that assigns to each point in the space the class in $C*$ of the distance function with respect to this point. \\
The closure of the space, denoted by $\overline{X\times X}^{\max}$, is the closure of $\iota(X\times X)$ in $C_{\ast}$, and the ideal boundary, denoted by $\partial_{\infty}^{\max}(X \times X)$, is the set $(\overline{X\times X}^{\max})\setminus \iota(X\times X)$.  Both $\overline{X\times X}^{\max}$ and $\partial_{\infty}^{\max}(X\times X)$ are compact since $X\times X$ is locally compact. 

A horofunction is a continuous function such that its class belongs to $\partial_{\infty}^{\max}( X \times X)$. One can think of horofunctions as limits of normalized distance functions. The level sets of a horofunction are known as horospheres and the sublevel sets as horoballs. Observe that two horofunctions in the same equivalence class differ by a constant and share the same set of horospheres and horoballs.

We can compactify in the same way the original space $(X, d_X)$, since it is also a proper metric space. In this case, since $(X, d_X)$ is $\text{CAT}(-1)$, the compactification obtained is homeomorphic to the compactification by rays, where the points at infinity are equivalence classes of rays, and two rays $c$, $c'$ are in the same class if $d_X(c(t), c'(t))\leq C$ for all $t$. The equivalence class of a ray $c$ is denoted by $c(\infty)$, the ideal boundary of $X$ by $\partial_{\infty}X$ and an arbitrary point in $\partial_{\infty} X$ by $\xi$. The horofunctions  of a CAT(0) space are in fact Busemann functions and can be written as:
$$\beta_{\xi}^p (\cdot)= \lim_{t\rightarrow\infty} d_{X}(\cdot , c(t))-t,$$
where $c$ is a ray such that $c(0)=p$ and $c(\infty)= \xi \in \partial_{\infty}X$. Observe that $\beta^{p}_{\xi}$ is the representative of $\xi\in \partial_{\infty}X$ that satisfies $\beta^{p}_{\xi}(p)=0$.
Two properties we will use about Busemann functions are:
\begin{itemize}
\item If a sequence $\{x_n\}_n$ converges to a point $\xi$ in $\partial_{\infty}X$, then \cite[Prop. 2.5]{Ballmann}
$$\beta^o_{\xi}(y)=\lim_{n\rightarrow\infty} d(x_n, y) - d(x_n, o).$$
\item $ \beta^o_{\xi}(y) = -\beta^y_{\xi}(o)$ \cite[Sect.2.1]{Bourdon}.
\end{itemize}
\begin{example}
\label{boundary of Hn}
 Let $X$ be a complete simply connected n-dimensional Riemannian manifold $X$ of sectional curvature $\leq -1$, then $\partial_{\infty} X\cong S^{n-1}$. In particular, $\partial_{\infty}\mathbb{H}^n \cong S^{n-1}\cong (T_{x_0}\mathbb{H}^n)^{1}$ for any $x_0\in X$.
 \end{example}

Now we list all the possible boundary points obtained when compactifying $(X \times X, d_{\max})$. For the calculations, we have chosen a base point $O=(o, o)$ with $o \in X$ and then, as a representative of a class of distance functions the function 
$$d_{\max}^{O}((x,y), \cdot)= d_{\max}((x,y), \cdot) - d_{\max}((x,y), (o,o)).$$
Then $\{[d_{\max}(P_n, \cdot)]\}_n\rightarrow \xi$ if and only if $\{d_{\max}^{O}(P_n, \cdot)\}_n \rightarrow h^{O}_{\xi}$, where $h^{O}_{\xi}$ is the horofunction in $\xi$ that satisfies $h^{O}_{\xi}(O)=0$.
\begin{proposition}
\label{boundary points}
Every divergent sequence $\{(x_n, y_n)\}_ n \subset (X \times X, d_{\max})$ has a subsequence that satisfies, except for permutations of $x_n$ and $y_n$, one of the possibilities below :
\begin{enumerate}[label=(\Roman*)]
\item $d_{X}(x_n, o) \text{ is bounded for all n and } \{y_n\}_n\rightarrow \xi' \in \partial_{\infty}X$.
\item $\{x_n\}_n\rightarrow \xi\in \partial_{\infty}X \text{, } \{y_n\}_n \rightarrow \xi'\in \partial_{\infty}X \text{ and } d_{X}(x_n, o)- d_{X}(y_n, o)\rightarrow C $. 
\item $\{x_n\}_n\rightarrow \xi\in \partial_{\infty}X \text{, } \{y_n\}_n\rightarrow \xi'\in \partial_{\infty}X \text{ and } d_{X}(x_n, o)- d_{X}(y_n, o) \rightarrow +\infty$.  
\end{enumerate}
For each of the possibilities, the limit of the subsequence is 
\begin{enumerate}[label=(\Roman*)]
\item $\lim_{n\rightarrow \infty} d_{\max}^{O}((x_n,y_n),(z,z'))  = \beta^{o}_{\xi'}(z')$.
\item $\lim_{n\rightarrow \infty} d_{\max}^{O}((x_n,y_n),(z,z'))= \max\{\beta^{o}_{\xi}(z), \beta^{o}_{\xi'}(z')+C\}$.
\item $\lim_{n\rightarrow \infty} d_{\max}^{O}((x_n,y_n),(z,z'))= \beta^{o}_{\xi}(z)$.
\end{enumerate}
For every $(z,z')$ in $X\times X$.
\end{proposition}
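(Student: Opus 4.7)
The plan is to organise divergent sequences in $(X\times X, d_{\max})$ into the three cases by successive subsequence extractions, and then to compute the horofunction limit in each case using the first property of Busemann functions recalled in the text, namely $\beta^o_\xi(w) = \lim_n d_X(x_n, w) - d_X(x_n, o)$ whenever $x_n \to \xi \in \partial_\infty X$.

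Set $a_n = d_X(x_n, o)$ and $b_n = d_X(y_n, o)$, so that $d_{\max}((x_n, y_n), O) = \max(a_n, b_n)$. The sequence diverges in $(X\times X, d_{\max})$ if and only if $\max(a_n, b_n) \to \infty$, and after permuting $(x_n, y_n)$ and passing to a subsequence I may assume $b_n \to \infty$. If $a_n$ stays bounded, the compactness of $X \cup \partial_\infty X$ supplies a further subsequence with $y_n \to \xi' \in \partial_\infty X$; this is Case~(I). Otherwise $a_n \to \infty$ too; extracting again, I arrange $x_n \to \xi$ and $y_n \to \xi'$ in $\partial_\infty X$, and then examine the real sequence $a_n - b_n$, which either admits a convergent subsequence with limit $C \in \mathbb{R}$ (Case~(II), together with a further coordinate swap if needed so that the sign of $C$ is consistent with the normalisation $h^O_\xi(O) = 0$) or a subsequence diverging to $\pm \infty$, which after a last permutation we may take to be $+\infty$ (Case~(III)).

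With the subsequences in hand, I would compute the limit of
\[
d_{\max}^{O}((x_n, y_n), (z, z')) = \max\{d_X(x_n, z), d_X(y_n, z')\} - \max(a_n, b_n)
\]
case by case. In Case~(I), boundedness of $a_n$ together with $b_n \to \infty$ force both maxima to be realised by their second argument for large $n$, so the expression collapses to $d_X(y_n, z') - b_n \to \beta^o_{\xi'}(z')$. In Case~(III), the gap $a_n - b_n \to +\infty$ forces both maxima to be realised by the first argument, giving $d_X(x_n, z) - a_n \to \beta^o_\xi(z)$. In Case~(II), both entries contribute: after subtracting $\max(a_n, b_n)$ inside and outside the outer maximum, the inner terms converge to Busemann values shifted by $\pm C$ (concretely $d_X(x_n, z) - b_n \to \beta^o_\xi(z) + C$ and $d_X(y_n, z') - b_n \to \beta^o_{\xi'}(z')$), and collecting terms yields $\max\{\beta^o_\xi(z), \beta^o_{\xi'}(z') + C\}$ once the coordinate permutation is chosen so that the limit vanishes at $(o, o)$.

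The main obstacle is essentially organisational: coordinating the several subsequence extractions and two potential coordinate swaps, and verifying that the convergence in each case is locally uniform in $(z, z')$, so as to give convergence in $C_\ast$ rather than mere pointwise convergence. Both points reduce to the locally uniform nature of the Busemann-function limit cited from \cite[Prop.~2.5]{Ballmann}, which is the only non-elementary input; everything else is algebraic manipulation of maxima and triangle-inequality estimates.
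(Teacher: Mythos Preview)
Your proposal is correct and follows essentially the same route as the paper: reduce to one of the three cases by successive subsequence extractions and then read off the horofunction limit from the Busemann characterisation, with local uniformity coming from the $1$-Lipschitz equicontinuity of the normalised distance functions. The paper in fact omits the subsequence classification and writes out only Case~(II), treating the outer $\max$ by a four-way split on which argument realises each maximum, whereas your sketch implicitly relies on the elementary inequality $|\max(a,b)-\max(c,d)|\le\max(|a-c|,|b-d|)$ to pass to the limit directly; this is a cleaner way to reach the same conclusion (and, incidentally, your intermediate expression $\max\{\beta^o_\xi(z)+C,\beta^o_{\xi'}(z')\}$ is the one that actually vanishes at $O$ under your convention $b_n=\max(a_n,b_n)$---the form $\max\{\beta^o_\xi(z),\beta^o_{\xi'}(z')+C\}$ in the statement corresponds to the opposite sign choice for $C$, which is the convention the paper's own proof uses).
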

\begin{proof}
We do the proof for case (II), the other two cases can be obtained in a similar fashion. Denote $C_n= d(x_n,o) - d(y_n,o)$ and assume to simplify $C_n\geq 0$. Then
\begin{multline*}
d_{\max}((x_n, y_n), (x,y))- d_{\max}((x_n,y_n), (o,o)) =\\  \max\{d(x_n,x) - d(x_n,o), d(y_n,y)-d(y_n,o)-C_n\}.
\end{multline*}
Fix an $\epsilon>0$ and an $r>0$, we want to see that there is an $N$ such that for all $n>N$ and for all $(x,y)\in B^{\max}(O,r)$,
\begin{equation*}
|\max\{d(x_n,x)-d(x_n,x), d(y_n, y) - d(y_n,o)-C_n\}-\max\{\beta_{\xi}^o(x), \beta^o_{\xi'}(y)-C\}|<\epsilon,
\end{equation*}
where $B^{\max}(O,r)$ is the ball of center $O=(o,o)$ and radius $r$ in $(X\times X, d_{\max})$.
There are four cases to check. We do the case for which the first maximum is $d(x_n,x) - d(x_n,o)$ and the second maximum is $\beta^o_{\xi'}(y)-C$, the other are similar. Because of the definition of Busemann function, there is an $N$ such that for all $(x,y)\in B^{\max}(O, r)$, and all $n>N$  
$$d(x_n,x)-d(x_n,o) -\beta^o_{\xi}(x)<\epsilon,$$
$$d(y_n,y) - d(y_n,o) - C_n - (\beta_{\xi'}^o(y') -C)>-\epsilon.$$
We have on the one hand:
\begin{eqnarray*}
d(x_n,x)- d(x_n, o)-(\beta^o_{\xi'}(y)-C)&=&
d(x_n,x)-d(x_n,o) -\beta^o_{\xi}(x) \\&+& \beta^o_{\xi}(x) -(\beta_{\xi'}^o(y)-C)<\epsilon
\end{eqnarray*}
and 
\begin{multline*}
d(x_n,x)-d(x_n,o) -(\beta_{\xi'}^o(y)-C)=
d(x_n,x)-d(x_n,o) -(d(y_n,y)-d(y_n,o) -C_n)+\\
d(y_n,y) - d(y_n,o) - C_n - (\beta_{\xi'}^o(y') -C)> -\epsilon  
\end{multline*}
\end{proof}

We call the set of boundary points with a representative of the form $\beta^{o}_{\xi}(z)$ or $\beta^{o}_{\xi'}(z')$ the singular part of the boundary and we denote them by $\partial_{\infty}^{\max}(X \times X)_{\text{sing}}$. The rest of the points, those with a representative of the form $\max\{\beta^{o}_{\xi}(z), \beta^{o}_{\xi'}(z')+C\}$, define the regular part of the boundary and we denote them by $\partial_{\infty}^{\max}(X \times X)_{\text{reg}}$. 

Observe that for any constant $C'$, the function $\max\{\beta^{o}_{\xi}(z), \beta^{o}_{\xi'}(z')+C\}+C'=\max\{\beta^{o}_{\xi}(z)+ C', \beta^{o}_{\xi'}(z')+C+C'\}$ is in the same class as the function $\max\{\beta^{o}_{\xi}(z), \beta^{o}_{\xi'}(z')+C\}$. Since two Busemann functions of $X$ associated to the same point $\xi\in\partial_{\infty}X$ differ by a constant, for each $C'$ we can find points $p$ and $p'$ in $X$ such that $\beta^p_{\xi}(z)=\beta^{o}_{\xi}(z)+ C'$ and $\beta^{p'}_ {\xi'}(z')=\beta^{o}_{\xi'}(z')+C+C'$. So the regular points are in fact the classes modulo constant of the functions $\max\{\beta^p_{\xi}(z), \beta^{p'}_{\xi'}(z')\}$ for all $p, p'\in X$ and $\xi, \xi'\in \partial_{\infty}X$.

\begin{proposition}
\label{homeo singular}
 There is a natural homeomorphism 
$$\varphi_{sing}: \partial_{\infty}^{\max}(X \times X)_{\text{sing}} \longrightarrow \partial_{\infty}X \sqcup \partial_{\infty} X $$
that consists in associating to a Busemann function that takes values only in the first (second) factor of $X \times X$ the same Busemann function viewed as a point of the first (second)  factor  in $\partial_{\infty}X \sqcup \partial_{\infty} X$.
\end{proposition}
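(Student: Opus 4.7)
The plan is to check that the prescription $\varphi_{sing}$ is well defined on classes modulo additive constant, is a bijection, and is continuous; a compactness argument then promotes it to a homeomorphism.

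Well-definedness requires that a singular boundary point determine unambiguously both the factor on which its representative depends and the corresponding ideal point of $X$. Normalizing so that representatives vanish at $O=(o,o)$, a function of the form $\beta^o_\xi(z)$ cannot differ by a constant from one of the form $\beta^o_{\xi'}(z')$: evaluating at $z=o$ would force $\beta^o_{\xi'}(z')$ to be constant, which contradicts its being a Busemann function of a boundary point of $X$. Within a fixed factor, the ideal point $\xi$ is recovered from $\beta^o_\xi$ by the standard fact that the horofunction compactification of $X$ embeds $\partial_\infty X$ injectively into $C_*(X)$.

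Bijectivity is then immediate: injectivity is exactly what was just verified, and surjectivity follows from case (III) of Proposition \ref{boundary points}. Given $\xi\in\partial_\infty X$, choose a ray $c$ with $c(0)=o$ and $c(\infty)=\xi$; the sequence $\{(c(n),o)\}_n$ lies in case (III) and produces a singular boundary point whose representative is $\beta^o_\xi(z)$, mapping to $\xi$ in the first copy of $\partial_\infty X$. The symmetric construction handles the second copy.

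For continuity, fix throughout the normalization $h(O)=0$. If $\xi_n\to\xi$ in $\partial_\infty X$, then $\beta^o_{\xi_n}\to\beta^o_\xi$ uniformly on compacts of $X$ by definition of the horofunction topology; since the canonical representatives on $X\times X$ depend on only one coordinate, this transfers directly to uniform convergence on compacts of $X\times X$. Conversely, assume singular points $h_n$ converge in $C_*$ to a singular point $h$. The main subtlety is to rule out a sequence supported on the first factor accumulating at a point supported on the second factor: the normalized representatives $h_n(z,z')=\beta^o_{\xi_n}(z)$ vanish identically on $\{o\}\times X$, so any limit must vanish there, which excludes a limit of the form $\beta^o_{\xi'}(z')$ with $\xi'\in\partial_\infty X$. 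Thus convergence stays within a single copy, and restricting to $X\times\{o\}$ (or $\{o\}\times X$) reduces it to convergence $\xi_n\to\xi$ in $\partial_\infty X$.

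The hardest point is precisely the separation between the two copies under limits, and it is handled by the basepoint normalization as above. Once continuity is established, $\varphi_{sing}$ is a continuous bijection from $\partial_\infty^{\max}(X\times X)_{\text{sing}}$, a closed subset of the compact Hausdorff space $\overline{X\times X}^{\max}$, onto the Hausdorff space $\partial_\infty X\sqcup\partial_\infty X$, hence a homeomorphism.
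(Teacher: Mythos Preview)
Your proof is correct and follows the same idea as the paper, which dispatches the proposition in a single sentence (``It follows from the fact that the set of Busemann functions in one factor is naturally identified to the boundary of $X$''); you have simply spelled out the well-definedness, bijectivity, and continuity that the paper leaves implicit. One small inaccuracy: the sequence $\{(c(n),o)\}_n$ you invoke for surjectivity has $y_n=o$ bounded, so it falls under case~(I) of Proposition~\ref{boundary points} (up to the permutation of factors the paper allows), not case~(III).
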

\begin{proof}
It follows from the fact that the set of Busemann functions in one factor is naturally identified to the boundary of $X$.
\end{proof}
The regular part of the boundary can also be identified with a more easy to handle object. 
\begin{proposition}
\label{homeo regular}
For each choice of base point $(o,o')\in X\times X$ there is a natural homeomorphism 
\begin{equation}
\begin{array}{r@{}l}
\varphi_{reg}: \partial_{\infty}^{\max}(X \times X)_{\text{reg}} &{}\longrightarrow  \partial_{\infty} X \times \partial_{\infty} X\times \mathbb{R} \\ 
\left[ \max\{\beta_{\xi}^{p}(z), \beta^{p'}_{\xi'}(z')\}\right]  &{} \mapsto (\xi, \xi', \beta^{p'}_{\xi'}(o)-\beta^{p}_{\xi}(o')).
\end{array}
\end{equation}
\end{proposition}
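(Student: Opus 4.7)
The plan is to compose $\varphi_{reg}$ with the parametrization of the regular part implicit in Proposition \ref{boundary points}, and to reduce the topological questions to the standard fact that in a CAT$(-1)$ space the map $\xi\mapsto\beta^{o}_{\xi}$ is a homeomorphism from $\partial_{\infty}X$ onto its image in $C_{*}$ (with the topology of uniform convergence on compact sets). First I will check that $\varphi_{reg}$ is well-defined: if $[\max\{\beta^{p}_{\xi}(z),\beta^{p'}_{\xi'}(z')\}]=[\max\{\beta^{p_{1}}_{\xi}(z),\beta^{p'_{1}}_{\xi'}(z')\}]$, equality of the two maxima up to a constant forces $\beta^{p}_{\xi}(p_{1})=\beta^{p'}_{\xi'}(p'_{1})$, and a short computation using the identity $\beta^{y}_{\xi}(x)=-\beta^{x}_{\xi}(y)$ gives $\beta^{p'_{1}}_{\xi'}(o)-\beta^{p_{1}}_{\xi}(o')=\beta^{p'}_{\xi'}(o)-\beta^{p}_{\xi}(o')$.

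For bijectivity, I use that by the discussion following Proposition \ref{boundary points} every regular class has a representative of the form $\max\{\beta^{o}_{\xi}(z),\beta^{o}_{\xi'}(z')+C\}$ with $(\xi,\xi',C)\in\partial_{\infty}X\times\partial_{\infty}X\times\mathbb{R}$ uniquely determined: $\xi$ is intrinsic because along a ray $r$ from $o$ to $\xi$ one has $\beta^{o}_{\xi}(r(t))=-t$, while in a CAT$(-1)$ space for any $\eta\neq\xi$ a ray to $\xi$ escapes every horoball based at $\eta$, so $\beta^{o}_{\eta}(r(t))\to+\infty$ (similarly for $\xi'$); once $\xi,\xi'$ are recovered, $C$ is determined by evaluation at test points. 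Rewriting the representative as $\max\{\beta^{p}_{\xi}(z),\beta^{p'}_{\xi'}(z')\}$ with $p=o$ and $p'$ chosen so that $\beta^{p'}_{\xi'}(o)=C$, one computes
\begin{equation*}
\varphi_{reg}\bigl([\max\{\beta^{o}_{\xi}(z),\beta^{o}_{\xi'}(z')+C\}]\bigr)=(\xi,\xi',C-\beta^{o}_{\xi}(o')),
\end{equation*}
which is an affine bijection in $C$ for each fixed $(\xi,\xi')$, so $\varphi_{reg}$ itself is a bijection.

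For continuity, $\varphi_{reg}^{-1}$ is straightforward: if $(\xi_{n},\xi'_{n},t_{n})\to(\xi,\xi',t)$ then $\beta^{o}_{\xi_{n}}\to\beta^{o}_{\xi}$ and $\beta^{o}_{\xi'_{n}}\to\beta^{o}_{\xi'}$ uniformly on compacts, the shifts $C_{n}=t_{n}+\beta^{o}_{\xi_{n}}(o')$ converge to $C$, and the $\max$ of two uniformly convergent families converges uniformly. For continuity of $\varphi_{reg}$ itself, I plan to argue that a convergent sequence $[h_{n}]\to[h]$ of regular classes can, by density of $\iota(X\times X)$ together with a diagonal extraction, be realised as the limit of points $(x_{n},y_{n})\in X\times X$; Proposition \ref{boundary points} then forces case (II), i.e.\ $x_{n}\to\xi$, $y_{n}\to\xi'$ and $d(x_{n},o)-d(y_{n},o)\to C$, so the triples $(\xi_{n},\xi'_{n},C_{n})$ converge. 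The main obstacle is this forward continuity: I must rule out that $C_{n}$ drifts to $\pm\infty$ while $[h_{n}]$ stays in the regular part, since in that case the limit would fall into the singular part (cases (I) or (III) of Proposition \ref{boundary points}); once this properness is established, uniqueness of the limit forces convergence of the full triple.
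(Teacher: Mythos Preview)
Your treatment of well-definedness, bijectivity, and continuity of $\varphi_{reg}^{-1}$ is correct and closely parallels the paper's argument (though organized slightly differently: the paper first shows that the pair $(\xi,\xi')$ is determined by the class before turning to the $\mathbb{R}$-coordinate, while you distribute this between your well-definedness and bijectivity paragraphs).

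The only substantive issue is in forward continuity. Your diagonal-extraction idea---approximating each $[h_n]$ by a point $(x_n,y_n)\in X\times X$ and then applying Proposition~\ref{boundary points} to the resulting sequence---does not by itself yield convergence of the triples $(\xi_n,\xi'_n,C_n)$. Proposition~\ref{boundary points} applied to $(x_n,y_n)\to[h]$ tells you about the parameters $(\xi,\xi',C)$ of the \emph{limit} $[h]$, not about those of the intermediate classes $[h_n]$. To close the gap you would need to know that $(x_n,y_n)$ being $C_*$-close to $[h_n]$ forces $x_n$ close to $\xi_n$, $y_n$ close to $\xi'_n$, and $d(x_n,o)-d(y_n,o)$ close to $C_n$; this is not automatic and is essentially as hard as the direct approach.

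In fact your final sentence already contains the paper's proof, and the diagonal extraction is unnecessary. The paper argues directly on the triples: if $C_n$ were unbounded the limit of $[h_n]$ would be singular, so $C_n$ is bounded; then by compactness of $\partial_\infty X$ every subsequence of $(\xi_n,\xi'_n,C_n)$ has a further subsequence converging to some $(\eta,\eta',C')$, continuity of $\max$ sends this to $[\max\{\beta^o_\eta,\beta^o_{\eta'}+C'\}]$, and uniqueness of limits in $C_*$ together with injectivity forces $(\eta,\eta',C')=(\xi,\xi',C)$. That is the whole argument---drop the diagonal extraction and your last sentence becomes the proof.
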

\begin{remark}
\label{homeo regular with a base point}
Under our choice of base point the homeomorphism $(1)$ takes the form:
 \begin{equation}
\begin{array}{r@{}l}
\varphi_{reg}: \partial_{\infty}^{\max}(X \times X)_{\text{reg}} &{}\longrightarrow  \partial_{\infty} X \times \partial_{\infty} X\times \mathbb{R} \\ 
\max\{\beta^{o}_{\xi}(z), \beta^{o}_{\xi'}(z')+C\} &{}\mapsto (\xi, \xi', C).
\end{array}
\end{equation}
\end{remark}
\begin{proof}(of Prop. 3)
The map (1) is well defined since two horofunctions in the same class differ by a constant and $\max\{\beta^p_{\xi}, \beta^{p'}_{\xi'}\}\neq \max\{\beta^q_{\eta}, \beta^{q'}_{\eta'}\}$ for $(\xi,\xi')\neq(\eta, \eta')$.  Two see this, normalize the Busemann functions with respect to the same point; i.e. $\max\{\beta^p_{\xi}, \beta^{p'}_{\xi'}\}=\max\{\beta^p_{\xi}, \beta^p_{\xi'}+A\}$ and $\max\{\beta^q_{\eta}, \beta^{q'}_{\eta'}\}=\max\{\beta^p_{\eta}+B, \beta^p_{\eta'}+C\}$ for some constants $A$, $B$, $C$. Choose a sequence $z_n\rightarrow \xi$ along the geodesic ray joining $p$ and $\xi$. Then $\beta^p_{\xi}(z_n)\rightarrow -\infty$ and $\beta^p_{\nu}(z_n)\rightarrow +\infty$ for all $\nu\in\partial_{\infty} X$ such that $\nu\neq \xi$. Using this property one can see that $\xi=\eta$ and $\xi'=\eta'$ if $\max\{\beta^p_{\xi}, \beta^{p'}_{\xi'}\} = \max\{\beta^q_{\eta}, \beta^{q'}_{\eta'}\}$.

Now for each class we choose the representative of the form $\max\{\beta^{o}_{\xi}(z), \beta^{o}_{\xi'}(z')+C\}$ and prove that the map (2) is a homeomorphism. Injectivity is clear. For the exhaustivity, given $(\xi, \xi', C)$ one can see that the sequence $(g(n), g(-n))$ such that $g$ is a parameterization of the geodesic line joining $\xi'$ and $\xi$ with $\beta^o_{\xi}(g(0))- \beta^o_{\xi'}(g(0))=C$ has as a limit $\max\{\beta^o_{\xi}, \beta^o_{\xi'}+C\}$. 

For the continuity, take a sequence $\max\{\beta^o_{\xi_n}, \beta^o_{\xi'_n}+C_n\}$ that converges to a point $\max\{\beta^o_{\xi}, \beta^o_{\xi'}+C\}$. The $C_n$ must be bounded, otherwise the sequence converges to a Busemann function in one factor. This and the compactness of the set of Busemann fuctions of $X$, implies that $(\xi_n, \xi'_n, C_n)$ converges to some point $(\eta, \eta', C')$. Since the maximum function is continuous, $\max\{\beta^o_{\xi_n}, \beta^o_{\xi'_n}+C_n\}$ should also converge to $\max\{\beta^o_{\eta}, \beta^o_{\eta'}+C\}$. Therefore, $\max\{\beta^o_{\xi}, \beta^o_{\xi'}+C\}=\max\{\beta^o_{\eta}, \beta^o_{\eta'}+C\}$ and $(\xi, \xi', C)= (\eta, \eta', C')$. The continuity of the maximum function also assures that the inverse of (2) is continuous.

\end{proof}
\begin{example}
For $X$  a complete simply connected n-dimensional Riemannian manifold of sectional curvature $\leq -1$, $\partial_{\infty}^{\max}(X \times X)_{reg} \cong S^{n-1}\times S^{n-1} \times \mathbb{R}$ and $\partial_{\infty}^{\max}(X \times X)_{sing} \cong S^{n-1}\sqcup S^{n-1}$. It can be shown that the boundary of $X\times X$ is homeomorphic to a $(2n-1)$-sphere:
$$\partial_{\infty}^{\max}(X \times X)\cong \join(S^{n-1}, S^{n-1}) \cong  S^{2n-1}.$$ 
\end{example}
\section{An ideal domain for the action of $\Gamma$}
\label{ideal domain}

From now on, $\Gamma$ will be a discrete and cocompact subgroup of the isometries of $X$. Recall that we were interested in the diagonal action of $\Gamma$ on $X\times X$. In this section we look for an open subset of $\partial_{\infty}^{\max}(X\times X)$ where the action of $\Gamma$ is good enough. 

Let $D$ denote the diagonal in $\partial_{\infty}X \times \partial_{\infty}X$:
 $$D = \{(\xi, \xi)\text{; } \xi \in \partial_{\infty}X \}\cong \partial_{\infty} X,$$
 and $\Lambda_{(x,y)}$, the limit set of the orbit of a point $(x,y)$ in $X\times X$:
$$\Lambda_{(x,y)}= \{\overline{\Gamma(x,y)} \cap \partial^{\max}_{\infty} (X\times X)\}.$$
Notice that $\Lambda_{(x,y)}$ depends on $(x,y)\subset X\times X$ since this space is not $\text{CAT}(-1)$. We define the limit set $\Lambda_{\Gamma}$ of $\Gamma$ as
$$\Lambda_{\Gamma}= \bigcup_{(x,y)\in X\times X} \Lambda_{(x,y)}.$$
\begin{proposition} 
\label{limit set}
The diagonal action of $\Gamma$ on $X\times X$ satisfies 
\begin{enumerate}
\item $\Lambda_{\Gamma} \subset \partial_{\infty}^{\max}(X \times X)_{\text{reg}}$
\item $\varphi_{reg}(\Lambda_{\Gamma}) = D \times \mathbb{R}$
\end{enumerate}
\end{proposition}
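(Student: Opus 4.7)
The plan is to establish (1) and the inclusion $\varphi_{reg}(\Lambda_\Gamma) \subset D\times\mathbb{R}$ of (2) in a single step, and then produce explicit orbit sequences realizing each point of $D\times\mathbb{R}$. Throughout, the classification in Proposition 2 is the workhorse.

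Suppose a divergent sequence $(\gamma_n x, \gamma_n y)$ in the orbit of some $(x,y)\in X\times X$ converges to a boundary point. Because $\gamma_n$ acts by isometries, $d_X(\gamma_n x, \gamma_n y) = d_X(x,y)$ for every $n$, and hence by the triangle inequality $|d_X(\gamma_n x, o) - d_X(\gamma_n y, o)| \leq d_X(x,y)$. Proper discontinuity and discreteness of $\Gamma$, applied to closed balls around $o$, force $d_X(\gamma_n o, o)\to\infty$ for any sequence of distinct elements, so $d_X(\gamma_n x, o)$ and $d_X(\gamma_n y, o)$ both tend to $+\infty$. These two observations exclude cases (I) and (III) of Proposition 2 and place us in case (II), proving (1). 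It remains to show that the two ideal limits $\xi = \lim \gamma_n x$ and $\xi' = \lim \gamma_n y$ coincide; for this I would compute the Gromov product
\[
(\gamma_n x \mid \gamma_n y)_o \;=\; \tfrac{1}{2}\bigl(d_X(\gamma_n x, o) + d_X(\gamma_n y, o) - d_X(x,y)\bigr) \;\longrightarrow\; +\infty,
\]
and invoke Gromov hyperbolicity of the $\text{CAT}(-1)$ space $X$ to conclude $\xi=\xi'$. This yields $\varphi_{reg}(\Lambda_\Gamma) \subset D\times\mathbb{R}$.

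For the reverse inclusion, I would take $(\xi,\xi,C)\in D\times\mathbb{R}$ and build an orbit approaching the corresponding boundary point. Fix a ray $c:[0,\infty)\to X$ with $c(0)=o$, $c(\infty)=\xi$, and reals $t_1,t_2\ge 0$ with $t_1-t_2=C$; set $p_n = c(n+t_1)$, $q_n=c(n+t_2)$, so that $p_n,q_n\to\xi$, $d_X(p_n,o)-d_X(q_n,o)=C$ and $d_X(p_n,q_n)=|C|$. Cocompactness furnishes $\gamma_n\in\Gamma$ with $\gamma_n^{-1}p_n$ in a compact fundamental domain $K$; since $\gamma_n^{-1}q_n$ stays within $|C|$ of $K$, it is also bounded. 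Passing to a subsequence, $\gamma_n^{-1}p_n\to x$ and $\gamma_n^{-1}q_n\to y$ for some $(x,y)\in X\times X$. The estimates $d_X(\gamma_n x, p_n), d_X(\gamma_n y, q_n)\to 0$ then imply $\gamma_n x, \gamma_n y\to\xi$ and $d_X(\gamma_n x, o) - d_X(\gamma_n y, o)\to C$, so by Proposition 2 (II) the orbit $(\gamma_n x,\gamma_n y)$ converges to the class of $\max\{\beta^o_\xi, \beta^o_\xi + C\}$, whose $\varphi_{reg}$-image is exactly $(\xi,\xi,C)$.

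The genuinely delicate step is the identification $\xi = \xi'$ in the first half: this is the only place where the $\text{CAT}(-1)$ hypothesis (beyond mere $\text{CAT}(0)$) is essential, since Gromov hyperbolicity is what forces two sequences at bounded distance escaping to infinity to share the same ideal endpoint. Everything else—the exclusion of cases (I) and (III), the construction via cocompactness, and the final limit identification—is a routine assembly of Proposition 2 with the standing assumptions on $\Gamma$.
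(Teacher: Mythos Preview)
Your argument is correct. The first half---excluding cases (I) and (III) via the triangle inequality and then forcing $\xi=\xi'$ through the Gromov product---is exactly what the paper does, only spelled out in more detail (the paper just says ``because $X$ is $\text{CAT}(-1)$'').

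For the reverse inclusion $D\times\mathbb{R}\subset\varphi_{reg}(\Lambda_\Gamma)$ you take a genuinely different route. The paper fixes a sequence $\gamma_n\in\Gamma$ with $\gamma_n o\to\xi$, passes to a subsequence so that $\gamma_n^{-1}o\to\xi'$ for some $\xi'\in\partial_\infty X$, and then chooses $(x,y)$ along the ray from $o$ to $\xi'$ with $\beta^o_{\xi'}(x)-\beta^o_{\xi'}(y)=C$; a one-line Busemann computation then gives $d(\gamma_n x,o)-d(\gamma_n y,o)\to C$. You instead place two points $p_n,q_n$ at prescribed spacing along a ray to $\xi$, use cocompactness to pull them back into a compact set, and extract the limiting pair $(x,y)$ there. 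Both approaches are short and natural. The paper's version is marginally slicker because the constant $C$ is read off directly from a Busemann value and no subsequence extraction in $X\times X$ is needed; your version has the advantage of never invoking the auxiliary backward endpoint $\xi'$ and of making the role of cocompactness completely explicit.
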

\begin{proof}
First observe that the limit of any sequence $(\gamma_n x, \gamma_n y)$ is in $D\times \mathbb{R}$. Indeed, by the triangle inequality, $|d(\gamma_nx, o)- d(\gamma_n y,o)|\leq d(x,y)$, so the limit is a regular point, and since $d(\gamma_nx, \gamma_n y)= d(x,y)$, if $\gamma_n x\rightarrow \xi$ then $\gamma_n y\rightarrow \xi$ because $X$ is $\text{CAT}(-1)$.

Let us see next that any point $(\xi, \xi, C)$ is in the limit set. Let $\gamma_n$ be a sequence in $\Gamma$ such that $\gamma_n \rightarrow \xi$. Observe that such a sequence exists since $\Gamma$ is cocompact and hence its limit set in $\overline{X}$ is the whole $\partial_{\infty}X$. Let $\xi'\in \partial_{\infty} X$ be the limit of the sequence $\gamma^{-1}_n$ and take any point $(x,y)$ satisfying $\beta^{o}_{\xi'}(x) - \beta^{o}_{\xi'}(y)=C$. For instance, one can take a point $(g(t), g(t'))$, where $g$ is the ray joining $o$ and $\xi'$ and $t-t'=C$. Then:
\begin{eqnarray*}
\lim_{n\rightarrow\infty} d(\gamma_n x, o) - d(\gamma_n y, o) &=& \lim_{n\rightarrow\infty} d(x, \gamma^{-1}_n o) - d(\gamma_n^{-1}o, o) - (d(y, \gamma^{1}_n o)- d(\gamma_n^{-1}o, o))\\ &=&\beta^{o}_{\xi'}(x) - \beta^{o}_{\xi'}(y)=C.
\end{eqnarray*}
Hence, the limit of the sequence $(\gamma_n x, \gamma_ny)$ is the point $(\xi, \xi, C)$.
\end{proof}

We choose, as a candidate for the domain at infinity, the set $\Omega\subset\partial_{\infty}^{\max} (X \times X)_{\text{reg}}$ such that:
$$\Omega \cong (\partial_{\infty} X \times \partial_{\infty}X \setminus D)\times \mathbb{R}$$
under the homeomorphism (1). Observe that we have excluded the whole region $D\times\mathbb{R}$.

Now consider the set:
 $$G= \{\text{parametrized geodesics in }X \}$$ 
This set is the same as the set of oriented geodesic lines with a distinguished base point and as we show next, it is in correspondence with the points of $\Omega$. Observe that there is a natural action of $\Gamma$ on $G$: an element $\gamma\in\Gamma$ sends a geodesic $g$ to a geodesic $\gamma g$.
\begin{lemma}
\label{ideal bijection}
The map
\begin{align*}
f: G &\longrightarrow \Omega\\
 g &\mapsto \lim_{n\rightarrow \infty} (g(n), g(-n))
\end{align*}
is a bijection.
\end{lemma}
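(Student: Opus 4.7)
My plan is to compose $f$ with the homeomorphism $\varphi_{reg}$ from Proposition \ref{homeo regular} and then show that the resulting map
$$g \longmapsto (g(+\infty),\, g(-\infty),\, C(g))$$
from $G$ to $(\partial_{\infty} X\times \partial_{\infty} X\setminus D)\times \mathbb{R}$ is a bijection, where $C(g)$ is the real coordinate produced by Proposition \ref{boundary points}.

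First I would verify well-definedness. Since $g$ is an isometric embedding of $\mathbb{R}$, the rays $g|_{[0,\infty)}$ and $t\mapsto g(-t)$ define distinct ideal endpoints $g(+\infty)\neq g(-\infty)$ (the two endpoints of a geodesic line in a $\text{CAT}(-1)$ space are distinct). A direct estimate using the definition of Busemann function gives
$$\lim_{n\to\infty}\bigl[d(g(n),o)-n\bigr]=\beta^{g(0)}_{g(+\infty)}(o),\qquad \lim_{n\to\infty}\bigl[d(g(-n),o)-n\bigr]=\beta^{g(0)}_{g(-\infty)}(o),$$
so the difference $d(g(n),o)-d(g(-n),o)$ converges to a finite constant $C(g)$. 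Therefore $(g(n),g(-n))$ falls under case (II) of Proposition \ref{boundary points}, its limit is a regular boundary point, and because the two ideal endpoints differ, this point lies in $\Omega$.

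For injectivity, suppose $f(g_1)=f(g_2)$. Applying $\varphi_{reg}$ forces $g_1(\pm\infty)=g_2(\pm\infty)$ and $C(g_1)=C(g_2)$. Since $X$ is $\text{CAT}(-1)$, any two distinct ideal points are joined by a unique geodesic line, so $g_1$ and $g_2$ parametrize the same oriented line; hence $g_2(t)=g_1(t+s)$ for some $s\in\mathbb{R}$. A short computation using $\beta^{g_1(0)}_{g_1(+\infty)}(g_1(s))=-s$ and $\beta^{g_1(0)}_{g_1(-\infty)}(g_1(s))=s$, combined with the change-of-basepoint rule $\beta^{q}_{\xi}=\beta^{p}_{\xi}-\beta^{p}_{\xi}(q)$, yields $C(g_2)-C(g_1)=2s$, so $s=0$ and $g_1=g_2$.

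For surjectivity, given $(\xi,\xi',C)\in(\partial_{\infty}X\times\partial_{\infty}X\setminus D)\times\mathbb{R}$, the visibility property of $\text{CAT}(-1)$ spaces provides a geodesic line $\tilde g$ with $\tilde g(+\infty)=\xi$ and $\tilde g(-\infty)=\xi'$. By the identity $C(\tilde g(\cdot+s))=C(\tilde g)+2s$, the map $s\mapsto C(\tilde g(\cdot+s))$ is an affine bijection $\mathbb{R}\to\mathbb{R}$, so we can choose $s$ so that $g(t):=\tilde g(t+s)$ satisfies $C(g)=C$, giving $f(g)$ with the prescribed $\varphi_{reg}$-image. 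The only non-routine step is the parametrization identity $C(g_s)=C(g)+2s$; it is a straightforward Busemann computation but it is what rigidifies the injectivity argument and supplies the real parameter needed for surjectivity.
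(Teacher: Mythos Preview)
Your proposal is correct and follows essentially the same route as the paper's proof: both compose $f$ with $\varphi_{reg}$, compute the real coordinate $C(g)$ via Busemann functions to land in case~(II) of Proposition~\ref{boundary points}, use uniqueness of the geodesic line through two distinct ideal points for injectivity, and exploit the affine dependence of $C$ on the parametrization for both injectivity and surjectivity. The only cosmetic difference is that in the injectivity step the paper routes through the Gromov product to deduce $\beta^{o}_{\xi}(g(0))=\beta^{o}_{\xi}(g'(0))$, whereas you compute the shift identity $C(g(\cdot+s))=C(g)+2s$ directly; these are two phrasings of the same linearity of Busemann functions along a geodesic line.
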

\begin{proof}
First of all we check that given a geodesic $g$ in $G$, the limit of the sequence $\{(g(n), g(-n))\}_n$ belongs to $\Omega$. In order for this limit to be a regular point of the boundary, the sequence $\{(g(n), g(-n))\}_n$ needs to belong to case $(II)$ of Proposition \ref{boundary points}, so the limit of the difference $d_X(g(n), o) - d_X(g(-n), o)$ has to be a real constant. This follows from the next calculation:
\begin{eqnarray*}
 \lim_{n\rightarrow \infty} d_X(g(n), o) - d_X(g(-n), o) &=&
\lim_{n\rightarrow \infty} d_X(g(n), o) - d_X(g(n),g(0)) \\&& + d_X(g(-n), g(0))  - d_X(g(-n), o) 
\\&=& \beta^ {g(0)}_{g(+\infty)} (o) + \beta^{g(0)}_{g(-\infty)}(o)\\&=&
\beta^{o}_{g(-\infty)}(g(0)) - \beta^{o}_{g(+\infty)}(g(0)) \in\mathbb{R}.
\end{eqnarray*}
Here we have used the fact that $d_X(g(n), g(0))= d_X(g(-n), g(0))$ and the definition of Busemann function.
Henceforth, the limit of the sequence $\{(g(n), g(-n))\}_n$ is the point
$$(g(+\infty), g(-\infty), \beta^{o}_{g(-\infty)}(g(0)) - \beta^{o}_{g(+\infty)}(g(0)))$$
in $\partial_{\infty} X \times \partial_{\infty}X \times \mathbb{R}$ under the homeomorphism $(2)$. Since $g(+\infty)\neq g(-\infty)$, this limit belongs to $\Omega$.

Let us check the injectivity of the map $f$. Suppose we have two different geodesics $g$ and $g'$ such that $f(g)=f(g')$. Then, $g(+\infty)=g'(+\infty)$ and $g(-\infty)= g'(-\infty)$ and since given two ideal points in a $\text{CAT}(-1)$ space there is a unique geodesic line having them as ideal endpoints \cite[Thm. 9.33]{BH}, both geodesics must be different parametrizations of the same geodesic line $L$. 
Now, observe that the difference $\beta^{o}_{g(-\infty)}(g(0)) - \beta^{o}_{g(+\infty)}(g(0))$ can be rewritten using the Gromov product $(g(+\infty)|g(-\infty))_{o}$:
\begin{eqnarray*} 
\beta^{o}_{g(-\infty)}(g(0)) - \beta^{o}_{g(+\infty)}(g(0))&=&\beta^ {g(0)}_{g(+\infty)} (o) - \beta^{g(0)}_{g(-\infty)}(o))=\\ &=& 2(\beta^{g(0)}_{g(+\infty)}(o) - (g(+\infty)|g(-\infty))_{o})=\\&=&
-2(\beta^{o}_{g(+\infty)}(g(0)) + (g(+\infty)|g(-\infty))_{o}).
\end{eqnarray*}
Therefore the two parametrizations satisfy:
$$-2(\beta^{o}_{g(+\infty)}(g(0)) + (g(+\infty)|g(-\infty))_{o})= -2(\beta^{o}_{g'(+\infty)}(g'(0)) + (g'(+\infty)|g'(-\infty))_{o}) $$
and hence $\beta^{o}_{\xi}(g(0)) = \beta^{o}_{\xi}(g'(0))$. Since both $g(0)$ and $g'(0)$ belong to $L$, we must have  $g(0)=g'(0)$, so both parametrizations of $L$ are the same.

To finish, we check the exhaustivity of $f$. Let $(\xi_+, \xi_-, r)$ be a point in $\Omega$ (seen through the homeomorphism $(1)$). We are looking for a geodesic $g$ such that $\lim_{n\rightarrow \infty} \{(g(n), g(-n))\}_n$ is $(\xi_+, \xi_-, r)$. We have already calculated the limit of such a sequence at the beginning of the proof and we know it is the point $(g(+\infty), g(-\infty), \beta^{o}_{g(-\infty)}(g(0)) - \beta^{o}_{g(+\infty)}(g(0)))$. So we look for a geodesic such that $g(+\infty)= \xi_+$, $g(-\infty)=\xi_-$ and $\beta^{o}_{g(-\infty)}(g(0)) - \beta^{o}_{g(+\infty)}(g(0))=r$. Let $L$ be the geodesic line with ideal endpoints $\xi_+$ and $\xi_-$. Consider a parametrization $g(t)$ of $L$. Since the functions $\beta^{o}_{\xi_+}(g(t))$ and $\beta^{o}_{\xi_-}(g(t))$ are lineal with slope $\pm 1$ respectively, there is a unique point $p$ in $L$ that satisfies
$$\beta^{o}_{\xi_-}(p)- \beta^{o}_{\xi_+}(p)=r.$$
The parametrization $g'(t)$ of $L$ such that $g'(0)=p$ is the one we are looking for, since it satisfies  
$g'(+\infty)= \xi_+$, $g'(-\infty)=\xi_-$ and $\beta^{o}_{g'(-\infty)}(g'(0)) - \beta^{o}_{g'(+\infty)}(g'(0)))=r$.

\end{proof}
\begin{remark} Observe that what we have checked in the proof of Lemma \ref{ideal bijection}  is in fact the bijectivity of the map $\varphi_{reg}\circ f$.
\end{remark}

We consider in $G$ the topology of uniform convergence on compact sets. 
\begin{theorem}
\label{ideal homeomorphism}

The map
\begin{align*} 
f: G &\longrightarrow \Omega\\
 g &\mapsto \lim_{n\rightarrow \infty} (g(n), g(-n))
\end{align*}
is an equivariant homeomorphism.
\end{theorem}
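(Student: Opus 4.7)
Equivariance is immediate from the definition of $f$: for any $\gamma \in \Gamma$ one has $(\gamma g)(\pm n) = \gamma \cdot g(\pm n)$, so $f(\gamma g) = \lim_n (\gamma g(n), \gamma g(-n)) = \gamma \cdot f(g)$, since the $\Gamma$-action on $\partial_{\infty}^{\max}(X\times X)$ extends the action on $X\times X$ by continuity. Bijectivity of $f$ is Lemma \ref{ideal bijection}, so the only remaining task is to check that $f$ and $f^{-1}$ are continuous. Since $\varphi_{reg}$ is a homeomorphism (Proposition \ref{homeo regular}), it suffices to analyze $\varphi_{reg}\circ f\colon G\to (\partial_{\infty}X\times\partial_{\infty}X\setminus D)\times\mathbb{R}$, which by the computation in the proof of Lemma \ref{ideal bijection} is the map
\[
g\;\longmapsto\;\bigl(g(+\infty),\,g(-\infty),\,\beta^{o}_{g(-\infty)}(g(0))-\beta^{o}_{g(+\infty)}(g(0))\bigr).
\]

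For the continuity of $\varphi_{reg}\circ f$, assume $g_n\to g$ uniformly on compact sets. The endpoint maps $g\mapsto g(\pm\infty)$ are continuous for proper $\text{CAT}(-1)$ spaces: since for every fixed $t$ we have $g_n(t)\to g(t)$, and since $g_n(t)$ lies on the ray from $g_n(0)$ to $g_n(\pm\infty)$, a standard visibility argument yields $g_n(\pm\infty)\to g(\pm\infty)$ in $\partial_{\infty}X$. The twist coordinate is then continuous because the Busemann cocycle $(\xi,x)\mapsto\beta^{o}_{\xi}(x)$ is jointly continuous on $\partial_{\infty}X\times X$; this is a consequence of the characterization $\beta^{o}_{\xi}(x)=\lim_{y\to\xi}(d(x,y)-d(o,y))$ together with the fact that Busemann functions are $1$-Lipschitz, which provides the equicontinuity needed to upgrade pointwise limits.

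For the continuity of $f^{-1}$, take a sequence $(\xi_n^{+},\xi_n^{-},r_n)\to(\xi^{+},\xi^{-},r)$ in $(\partial_{\infty}X\times\partial_{\infty}X\setminus D)\times\mathbb{R}$, and let $g_n,\,g$ be the corresponding parametrized geodesics constructed in the exhaustivity step of Lemma \ref{ideal bijection}. I will invoke two standard inputs from $\text{CAT}(-1)$ geometry: first, when the distinct pairs $(\xi_n^{+},\xi_n^{-})$ converge to the distinct pair $(\xi^{+},\xi^{-})$, the bi-infinite geodesic lines $L_n$ joining $\xi_n^{\pm}$ converge to $L$ uniformly on compact sets (for any choice of basepoints on them); second, the Busemann functions $\beta^{o}_{\xi_n^{\pm}}$ converge to $\beta^{o}_{\xi^{\pm}}$ uniformly on compact subsets of $X$. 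The basepoint $g_n(0)\in L_n$ is characterized as the unique solution along $L_n$ of $\beta^{o}_{\xi_n^{-}}(\cdot)-\beta^{o}_{\xi_n^{+}}(\cdot)=r_n$, and along $L_n$ this function is affine of slope $2$ in arclength. Combining the two inputs with $r_n\to r$ forces $g_n(0)\to g(0)$, which, together with the convergence of $L_n$ to $L$, promotes to $g_n\to g$ uniformly on every compact interval.

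The main obstacle is the last paragraph: pinning down the continuous dependence of the basepoint $g_n(0)$ on $(\xi_n^{\pm},r_n)$. This requires combining the convergence of bi-infinite geodesic lines with prescribed endpoints in a $\text{CAT}(-1)$ space and the joint continuity of Busemann functions in both the ideal point and the space variable. Neither property is explicitly stated in the preliminaries, but both are classical; once they are secured, all the remaining steps are routine and the equivariant homeomorphism follows.
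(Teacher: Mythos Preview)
Your argument is correct, but it takes a different route from the paper. The paper does not verify continuity of $\varphi_{reg}\circ f$ and its inverse directly; instead it factors this map through the Hopf parametrization
\[
\phi:G\longrightarrow ((\partial_\infty X\times\partial_\infty X)\setminus D)\times\mathbb{R},\qquad g\longmapsto \bigl(g(+\infty),\,g(-\infty),\,\beta^{g(0)}_{g(+\infty)}(o)\bigr),
\]
which is already known to be a homeomorphism (cited from Bourdon), and the elementary self-homeomorphism $h(\xi_+,\xi_-,r)=(\xi_+,\xi_-,2(r-(\xi_+|\xi_-)_o))$ of the target. One checks $\varphi_{reg}\circ f=h\circ\phi$ using the identity $\beta^{g(0)}_{g(+\infty)}(o)-\beta^{g(0)}_{g(-\infty)}(o)=2\bigl(\beta^{g(0)}_{g(+\infty)}(o)-(g(+\infty)|g(-\infty))_o\bigr)$, and the result follows.

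The trade-off is clear: the paper's route is much shorter and outsources exactly the two ``classical'' facts you flag (continuous dependence of bi-infinite geodesics on their endpoints, and joint continuity of Busemann functions) to the Hopf parametrization theorem, where they are already packaged. Your approach is more self-contained and makes explicit which ingredients are being used, but the passage showing $g_n(0)\to g(0)$ still needs one extra line to be airtight: you should first observe that the $g_n(0)$ remain in a fixed compact set (e.g.\ because each $g_n(0)$ lies at bounded distance, controlled by $|r_n|$ and the Gromov product $(\xi_n^+|\xi_n^-)_o$, from the nearest-point projection of $o$ onto $L_n$, and these projections converge), so that the uniform-on-compacta convergence of the Busemann functions actually applies at those points. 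Once that is said, your argument goes through.
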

\begin{proof}
To see that $f$ is a homeomorphism consider the map 
\begin{align*}
f': G &\rightarrow ((\partial_{\infty} X \times \partial_{\infty} X )\setminus D) \times \mathbb{R}\\
g &\mapsto (g(+\infty), g(-\infty), \beta_{g(+\infty)}^{g(0)}(o)- \beta_{g(-\infty)}^{g(0)}(o))
\end{align*}
which is just $\varphi_{reg} \circ f$ and which, as we have seen along the proof of Lemma \ref{ideal bijection}, is a bijection.
Let $\phi$ be the Hopf parametrization 
\begin{align*}
\phi: G&\rightarrow ((\partial_{\infty} X \times \partial_{\infty} X) \setminus D) \times \mathbb{R})\\
g&\mapsto (g(+\infty), g(-\infty), \beta^{g(0)}_{g(+\infty)}(o))
\end{align*}
which is a homeomorphism (see \cite[Sect. 2.9]{Bourdon}), and let $h$ be the map 
\begin{align*}
h: ((\partial_{\infty} X \times \partial_{\infty}  X) \setminus D) \times \mathbb{R})&\rightarrow ((\partial_{\infty} X \times \partial_{\infty} X) \setminus D) \times \mathbb{R})\\
(\xi_+, \xi_-, r) &\mapsto (\xi_+, \xi_-, 2(r-(\xi_+|\xi_-)_{O}))
\end{align*}
which is also a homeomorphism.

The following diagram commutes
\begin{center}
\xymatrix{
G \ar[r]^{f'} \ar[d]^{\phi} & ((\partial_{\infty} X\times \partial_{\infty} X) \setminus D)\times\mathbb{R})\\
((\partial_{\infty} X\times \partial_{\infty} X) \setminus D)\times\mathbb{R}) \ar[ru]^{h}}
\end{center}
Indeed, for any $g\in G$
\begin{eqnarray*}
h\circ\phi(g)&=& h((g(+\infty), g(-\infty), \beta^{g(0)}_{g(+\infty)}(o)))=\\
&=& (g(+\infty), g(-\infty), 2(\beta^{g(0)}_{g(+\infty)}(o)-(g(+\infty)|g(-\infty))_o)=\\
&=& (g(+\infty), g(-\infty), \beta ^{g(0)}_{g(+\infty)}(o)-\beta ^{g(0)}_{g(-\infty)}(o))=f'(g).
\end{eqnarray*}

Now, since both $\phi$ and $g$ are homeomorphisms, $f'$ is a homeomorphism. And since $f'=\varphi_{reg}\circ f$ and $\phi_{reg}$ is a homeomorphism, our map $f$ is a homeomorphism.

To finish, observe that the map $f$ is equivariant since given $\gamma\in \Gamma$
$$ f(\gamma g)= \lim_{n\rightarrow \infty} (\gamma g(n), \gamma g(-n))= \gamma\lim_{n\rightarrow \infty} (g(n), g(-n)) = \gamma f(g) $$
for any $g$ in $ G$.
\end{proof}
The next proposition is a consequence of Theorem \ref{ideal homeomorphism} and the properties of $G$. It will also be a consequence of Theorem \ref{quotient compact}.
\begin{proposition}
\label{action ideal domain}
The action of $\Gamma$ on $\Omega$ is cocompact and properly discontinuous.
\end{proposition}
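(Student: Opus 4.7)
The plan is to transport the question from $\Omega$ to $G$ via the $\Gamma$-equivariant homeomorphism $f:G\to\Omega$ of Theorem \ref{ideal homeomorphism}, and then prove directly that $\Gamma$ acts properly discontinuously and cocompactly on the space of parametrized geodesics $G$ equipped with the topology of uniform convergence on compact sets. Under the action $(\gamma\cdot g)(t)=\gamma(g(t))$, the central tool will be the evaluation map
\[
e:G\longrightarrow X,\qquad g\longmapsto g(0),
\]
which is continuous and $\Gamma$-equivariant.

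The key technical point I would establish first is: for every compact set $C\subset X$, the preimage $e^{-1}(C)=\{g\in G:g(0)\in C\}$ is compact in $G$. This is an Arzelà–Ascoli argument. Geodesics are $1$-Lipschitz, so the family is equicontinuous on each compact interval $[-N,N]$. Moreover, for each $t$, the set $\{g(t):g\in e^{-1}(C)\}$ is contained in the closed $|t|$-neighbourhood of $C$, which is compact since $X$ is proper. Applying Arzelà–Ascoli on $[-N,N]$ for each $N$ and extracting a diagonal subsequence yields a limit $g_\infty:\mathbb{R}\to X$; because the uniform limit of $1$-Lipschitz maps that are isometric on each compact subset is again an isometric embedding, $g_\infty\in G$. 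Hence $e^{-1}(C)$ is sequentially compact, and $G$ being metrizable, compact.

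Proper discontinuity then follows immediately. Given a compact $K\subset G$, the image $e(K)\subset X$ is compact. If $\gamma K\cap K\neq\emptyset$ then there exist $g,g'\in K$ with $g'=\gamma g$, whence $g'(0)=\gamma\,g(0)$ and therefore $\gamma\,e(K)\cap e(K)\neq\emptyset$. Since $\Gamma$ acts properly discontinuously on $X$, only finitely many $\gamma$ satisfy this, and the same $\gamma$'s are the only ones that can move $K$ into itself.

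For cocompactness, since $\Gamma$ acts cocompactly on $X$, pick a compact set $C\subset X$ with $\Gamma C=X$, and set $K=e^{-1}(C)$, which is compact by the step above. For any $g\in G$ there is $\gamma\in\Gamma$ with $\gamma^{-1}g(0)\in C$, i.e.\ $\gamma^{-1}g\in K$, so $g\in\gamma K$; hence $\Gamma K=G$. Transporting back along $f$ gives a compact $f(K)\subset\Omega$ whose $\Gamma$-translates cover $\Omega$. The only place where any real work is needed is the Arzelà–Ascoli argument identifying compact subsets of $G$ with those geodesics whose base point stays in a compact subset of $X$; everything else is a direct consequence of equivariance together with proper discontinuity and cocompactness of $\Gamma\curvearrowright X$.
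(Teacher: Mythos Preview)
Your proof is correct and follows exactly the route the paper indicates: the paper does not spell out a proof of this proposition but simply records that it is ``a consequence of Theorem~\ref{ideal homeomorphism} and the properties of $G$'' (and also of the later Theorem~\ref{quotient compact}). Your evaluation map $e:G\to X$, $g\mapsto g(0)$, is precisely the restriction to $G$ of the paper's map $\tilde\rho$ (after identifying $\Delta$ with $X$), and your Arzel\`a--Ascoli argument makes explicit the step the paper later takes for granted in Lemma~\ref{compact domain} (``the set of geodesics that go through a compact set is compact''); the deductions of proper discontinuity and cocompactness from equivariance of $e$ match the paper's argument in Theorem~\ref{quotient compact} restricted to $\Omega$.
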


\section{Compactness of $(X\times X\cup \Omega)/\Gamma$}
\label{compactness}
We define a topology in $X\times X\cup G$ in the following way. We keep the same topology in $X\times X$ and $G$ and we say a sequence $\{(x_n, y_n)\}_n$ in $X\times X$ converges to a point $g$ in $G$ if and only if:
\begin{enumerate}[label=(\Roman*)]
\item $\{x_n\}_n \rightarrow g(+\infty).$ 
\item $\{y_n\}_n \rightarrow g(-\infty).$
\item $d_X(x_n, o) - d_X(y_n,o) \rightarrow \beta_{g(-\infty)}^o(g(0)) - \beta_{g(+\infty)}^o(g(0)).$
\end{enumerate} 
With this topology $X\times X\cup G$ and $X\times X \cup \Omega$ are homeomorphic.

Now, let the diagonal in $X\times X$ be the set
$$\Delta= \{(x,x)\text{; } x\in X\}$$
and let $\rho: X\times X \rightarrow \Delta$ be the map that sends each point in $X\times X$ to its nearest point in $\Delta$ with respect to the metric $d_{\max}$. 
\begin{lemma}
\label{fibres}
The fibre $\rho^{-1}(a,a)$ is the set of point $(x,y)\in X\times X$ such that $a$ is the midpoint of the segment $xy$.
\end{lemma}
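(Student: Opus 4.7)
The plan is to compute the distance $d_{\max}((x,y),(z,z))= \max\{d_X(x,z),d_X(y,z)\}$ and see that minimizing this quantity over $z\in X$ forces $z$ to be the midpoint of the segment $xy$.

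First I would establish the lower bound: for every $z\in X$, the triangle inequality $d_X(x,z)+d_X(y,z)\geq d_X(x,y)$ gives
\begin{equation*}
\max\{d_X(x,z),d_X(y,z)\}\geq \tfrac12\bigl(d_X(x,z)+d_X(y,z)\bigr)\geq \tfrac12 d_X(x,y),
\end{equation*}
so $d_{\max}((x,y),\Delta)\geq \tfrac12 d_X(x,y)$. Next I would check that this lower bound is attained at the midpoint $m$ of $xy$: by definition $d_X(x,m)=d_X(y,m)=\tfrac12 d_X(x,y)$, so $d_{\max}((x,y),(m,m))=\tfrac12 d_X(x,y)$. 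This shows one inclusion: if $a$ is the midpoint of $xy$ then $(a,a)$ realizes the minimum, i.e.\ $\rho(x,y)=(a,a)$.

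For the reverse inclusion, suppose $(a,a)$ is a nearest point of $\Delta$ to $(x,y)$, so $\max\{d_X(x,a),d_X(y,a)\}=\tfrac12 d_X(x,y)$. Then both $d_X(x,a)\leq\tfrac12 d_X(x,y)$ and $d_X(y,a)\leq\tfrac12 d_X(x,y)$, while the triangle inequality gives $d_X(x,a)+d_X(y,a)\geq d_X(x,y)$. Summing the two upper bounds forces equality everywhere, so $d_X(x,a)=d_X(y,a)=\tfrac12 d_X(x,y)$ and $a$ lies on a geodesic segment joining $x$ and $y$. Since $X$ is $\mathrm{CAT}(-1)$, geodesic segments between distinct points are unique, so $a$ is exactly the midpoint of $xy$ (and in the degenerate case $x=y$ the condition forces $a=x=y$, which is still the ``midpoint'').

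Combining both directions gives the description of the fibre. No serious obstacle is expected: the argument is a direct application of the triangle inequality together with the equality case that characterizes the midpoint in a uniquely geodesic space. The only subtle point to flag is that this simultaneously shows $\rho$ is well-defined as a single-valued map, since the midpoint is unique in a $\mathrm{CAT}(-1)$ space.
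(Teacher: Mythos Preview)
Your argument is correct and complete: the triangle-inequality lower bound together with the equality case in a uniquely geodesic space is exactly what is needed, and you are right to note that this also establishes that $\rho$ is single-valued. The paper states this lemma without proof, so there is no alternative approach to compare against; your proof is the natural one.
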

\begin{definition}
We extend the projection $\rho$ to a map:
$$\tilde{\rho}: X \times X \cup G \rightarrow \Delta$$
as follows:
\begin{itemize}
\item If $p=(x,y)\in X\times X$, then $\tilde{\rho}(p)= \rho(p)$ .
\item If $g\in G$, then $\tilde{\rho}(g)=(g(0), g(0))$.
\end{itemize}
\end{definition}

The next step we want to undertake is to prove that this extension is continuous. Before, we need a couple auxiliary lemmas. 
\begin{lemma} 
\label{lema2}
In a proper $\text{CAT}(-1)$ space
$$\lim_{i,j}(x_i|y_j)_o= (\xi|\xi')_o,$$
for any sequences $x_i\rightarrow \xi$, $y_j\rightarrow \xi'$ \cite[Prop 3.4.2]{Schroeder}. Moreover, $(\xi|\xi')_o= +\infty$ if and only if $\xi=\xi'$ \cite[Ch.III.H. Rmk. 3.17]{BH}. 
\end{lemma}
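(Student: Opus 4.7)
The two assertions---continuity of the extended Gromov product, and the characterization of its infinite values---both flow from the CAT($-1$) hypothesis via Busemann functions. The strategy is to compute the double limit as an iterated limit and identify it with the formula for $(\xi|\xi')_o$ given in Section 2.

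My starting point is the rewriting
$$2(x_i|y_j)_o = d(o,y_j) - \bigl(d(x_i,y_j) - d(x_i,o)\bigr).$$
For each fixed $y_j$, the bracketed term converges to $\beta^o_\xi(y_j)$ as $x_i \to \xi$ by the Busemann limit formula quoted from Ballmann in Section 3. The first key step is to promote this pointwise convergence to convergence uniform in $y_j$ on arbitrary balls; comparison with $\mathbb{H}^2$ provides quantitative control on how fast $d(x_i,y) - d(x_i,o) \to \beta^o_\xi(y)$, and this is where the strict negative curvature is essential rather than just CAT($0$).

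Once the inner limit is taken, I am reduced to evaluating
$$\lim_{j\to\infty}\bigl(d(o,y_j)-\beta^o_\xi(y_j)\bigr) \;=\; -\beta^o_\xi(g(0))-\beta^o_{\xi'}(g(0)) \;=\; 2(\xi|\xi')_o,$$
where $g$ parametrizes the unique geodesic from $\xi'$ to $\xi$ when $\xi\neq\xi'$. I would first verify this when $y_j$ lies on $g$ using the identities $\beta^{g(0)}_\xi(g(-j))=j$ and $d(o,g(-j))-j \to -\beta^o_{\xi'}(g(0))$; the general case follows from the first together with the CAT($-1$) fact that any sequence converging to $\xi'$ hugs the ray $g((-\infty,0])$ in the sense needed for the estimate. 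When $\xi=\xi'$ the geodesic degenerates; in that case, taking $x_i=y_i$ along the ray from $o$ to $\xi$ gives $(x_i|y_i)_o = d(o,x_i)\to+\infty$, while for $\xi\neq\xi'$ the identity above shows that $(\xi|\xi')_o$ is finite.

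The main obstacle I expect is establishing sufficient uniformity in the double limit: although both individual limits are straightforward once the right formulas are in hand, the fact that $x_i$ and $y_j$ can approach $\xi$ and $\xi'$ along arbitrary sequences forces us to exploit the exponential divergence of geodesic rays characteristic of CAT($-1$) spaces, which is exactly the ingredient that fails in the merely CAT($0$) setting.
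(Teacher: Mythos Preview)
The paper does not give its own proof of this lemma: the statement is simply recorded with citations to Buyalo--Schroeder and Bridson--Haefliger, and the next lemma begins immediately afterward. So there is no argument in the paper to compare your sketch against.

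That said, your outline has a genuine gap at the point you yourself flag as the obstacle. After taking the inner limit you are left with $d(o,y_j)-\beta^o_\xi(y_j)=2(\xi\,|\,y_j)_o$, and you want this to converge to $2(\xi\,|\,\xi')_o$ as $y_j\to\xi'$. But that is itself a boundary--continuity statement of the same type as the lemma, so the reduction is not yet progress. Your proposed mechanism, ``uniform convergence on arbitrary balls'' obtained by comparison with $\mathbb{H}^2$, does not help here: uniform convergence of $d(x_i,\cdot)-d(x_i,o)$ to $\beta^o_\xi$ on bounded sets is automatic from the horofunction definition and holds already in CAT($0$); the difficulty is precisely that $y_j$ leaves every bounded set. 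What is actually needed is a CAT($-1$)--specific sharp inequality---for instance Bourdon's result that $e^{-(\cdot|\cdot)_o}$ extends to a genuine metric on $X\cup\partial_\infty X$, which is what the cited reference proves and from which the double limit follows immediately. Your sketch invokes ``exponential divergence of geodesic rays'' in the right spirit, but does not supply any concrete estimate that would control $(x_i\,|\,y_j)_o$ uniformly as both indices go to infinity.
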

\begin{lemma}
\label{geodesics}
Given a sequence of geodesic segments $g_n:[a_n, b_n]\rightarrow X$ such that $g_n(a_n) \rightarrow \xi$, $g_n(b_n) \rightarrow \xi'$ and $g_n(0)\rightarrow m\in X$, there exist a convergent subsequence to a geodesic $g$ satisfying $g(+\infty)=\xi$, $g(-\infty)=\xi'$ and $g(0)=m$.
\end{lemma}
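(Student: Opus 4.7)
The plan is to apply Arzelà--Ascoli together with a diagonal extraction to get a subsequential limit $g:\mathbb{R}\to X$ which is a parametrized geodesic through $m$ at time $0$, and then to identify its ideal endpoints using the stability of geodesics in the CAT$(-1)$ setting.

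First, I observe that $|a_n|$ and $|b_n|$ must both tend to infinity: since $d_X(g_n(0), g_n(a_n)) = |a_n|$, and $g_n(0)\to m$ stays bounded while $g_n(a_n)\to \xi \in \partial_\infty X$ eventually leaves every compact set, we must have $|a_n|\to\infty$, and similarly for $|b_n|$. Hence the intervals $[a_n, b_n]$ exhaust $\mathbb{R}$. Each $g_n$ is $1$-Lipschitz as an isometric embedding, so for every $T>0$ and $n$ large enough that $[-T,T]\subset[a_n,b_n]$, the restrictions $g_n|_{[-T,T]}$ form an equicontinuous family with values in a common compact subset of $X$ (using properness of $X$). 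Arzelà--Ascoli yields convergence on $[-T,T]$ along a subsequence, and a standard diagonal argument over $T\to\infty$ produces a subsequence $g_{n_k}$ converging uniformly on compact subsets of $\mathbb{R}$ to a continuous map $g:\mathbb{R}\to X$. Passing to the limit in $d_X(g_{n_k}(s),g_{n_k}(t))=|s-t|$ shows that $g$ is a parametrized geodesic, and $g(0)=\lim g_{n_k}(0)=m$ is immediate.

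The main step, and the only nontrivial one, is the identification of the ideal endpoints. Consider the half of $g_n$ going from $g_n(0)$ to $g_n(a_n)$: this is a geodesic segment whose endpoints converge to $m\in X$ and to $\xi\in\partial_\infty X$ respectively. In a proper CAT$(-1)$ space, such a sequence of segments converges uniformly on compact subsets to the unique geodesic ray from $m$ representing $\xi$; this is the standard stability of geodesics, which follows from uniqueness of the geodesic from a finite point to an ideal point together with a CAT$(0)$ comparison argument (see \cite[Part II]{BH}). Since the same half of $g_{n_k}$ also converges to the corresponding half of $g$ by the previous paragraph, the two limits must agree, forcing this half of $g$ to be exactly that ray, so its ideal endpoint is $\xi$. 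The other endpoint is identified symmetrically, using the half of $g_n$ from $g_n(0)$ to $g_n(b_n)$. The principal obstacle is therefore the stability argument; once that classical fact is invoked, the rest is routine Arzelà--Ascoli plus continuity of the isometric relation.
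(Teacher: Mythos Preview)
Your proof is correct. The opening is the same as the paper's---Arzel\`a--Ascoli (with your explicit justification that $|a_n|,|b_n|\to\infty$, which the paper leaves to the cited reference) produces a limiting geodesic $g$ with $g(0)=m$. Where you diverge is in identifying the ideal endpoints. The paper uses the continuity of the Gromov product (its Lemma~\ref{lema2}): since $(g_n(a_n)|g_n(b_n))_{g_n(t)}=0$ for every $t$ and every $n$, passing to the limit gives $(\xi|\xi')_{g(t)}=0$, which forces each $g(t)$ to lie on the unique geodesic line with endpoints $\xi$ and $\xi'$. You instead split each $g_n$ at time $0$ and invoke the CAT$(0)$ stability of geodesic segments whose endpoints converge to a point of $X$ and a point of $\partial_\infty X$ respectively. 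Your route has the minor advantage of pinning down the orientation of $g$ automatically, whereas the Gromov-product argument only places the image of $g$ on the correct line and, strictly speaking, needs one more word to say which end goes to $\xi$. The paper's route, on the other hand, recycles its own Lemma~\ref{lema2} rather than appealing to an external stability statement.
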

\begin{proof} The fact that the sequence $\{g_n\}_n$ converges to a geodesic $g$ such that $g(0)=m$ is a consequence of Arzela-Ascoli \cite[Thm. 1.4.9]{Papadopoulos} for proper metric spaces. Now, for any $t\in\mathbb{R}$ observe that
$(g_n(a_n)|g_n(b_n))_{g_n(t)}=0$, since for all $n$, $g_n(t)$ is a point of the segment $g_n$, and $\lim_n (g_n(a_n)|g_n(b_n))_{g_n(t)} \rightarrow (\xi|\xi')_{g(t)}$ by the continuity of the Gromov product. Therefore for all $t$, $(\xi|\xi')_g(t)=0$ and $g(t)$ belongs to the line joining $\xi$ and $\xi'$. Since this line is unique, $g(+\infty)=\xi$ and $g(-\infty)=\xi'$.

\end{proof}

\begin{proposition}
\label{continuous retraction}
The map
$$\tilde{\rho}: X\times X \cup G \rightarrow \Delta$$
is continuous.
\end{proposition}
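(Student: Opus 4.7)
Continuity must be verified for sequences converging to points of either $X\times X$ or $G$, and by the definition of the topology it suffices to treat three situations. (a) Continuity on $X\times X$: by Lemma~\ref{fibres}, $\rho$ sends $(x,y)$ to $(m,m)$ where $m$ is the midpoint of $[xy]$, and midpoints depend continuously on the endpoints in a CAT$(-1)$ space. (b) Sequences $g_n\to g$ in $G$: uniform convergence on compact sets gives $g_n(0)\to g(0)$, hence $\tilde\rho(g_n)\to\tilde\rho(g)$. (c) Sequences $(x_n,y_n)\in X\times X$ converging to $g\in G$: the substantial case, where I must show $m_n\to g(0)$ for $m_n$ the midpoint of $[x_ny_n]$. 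Sequences mixing the two types reduce to (b) and (c) by a subsequence argument.

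For case (c) my plan has three steps. \emph{Step 1:} Show $\{m_n\}$ is bounded. By Lemma~\ref{lema2} and conditions (I)--(II), $(x_n|y_n)_o$ converges to the finite number $(g(+\infty)|g(-\infty))_o$ since the two ideal endpoints are distinct; equivalently $d_X(x_n,o)+d_X(y_n,o)-d_X(x_n,y_n)$ is bounded. Combined with condition (III) and the midpoint identity $d_X(x_n,m_n)=d_X(y_n,m_n)=\tfrac12 d_X(x_n,y_n)$, each of
$$d_X(x_n,o)-d_X(x_n,m_n),\qquad d_X(y_n,o)-d_X(y_n,m_n)$$
is bounded (their sum is $2(x_n|y_n)_o$ and their difference is $d_X(x_n,o)-d_X(y_n,o)$). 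CAT$(-1)$ geometry then yields the bound on $d_X(o,m_n)$: the foot of perpendicular $p_n$ from $o$ to $[x_ny_n]$ lies at bounded distance from $o$ (controlled by $(x_n|y_n)_o$) and, by hyperbolic trigonometry in the comparison triangle, $m_n$ lies at bounded distance from $p_n$ (controlled by $d_X(x_n,o)-d_X(y_n,o)$).

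\emph{Step 2:} Extract a subsequence with $m_n\to m\in X$ by properness. Parametrize $[x_ny_n]$ as $g_n\colon[-b_n,b_n]\to X$ with $g_n(0)=m_n$, $g_n(\pm b_n)$ equal to $x_n$ and $y_n$, and $b_n=\tfrac12 d_X(x_n,y_n)\to+\infty$. Lemma~\ref{geodesics} supplies, on a further subsequence, a limit geodesic $g_\infty$ with $g_\infty(\pm\infty)=g(\pm\infty)$ and $g_\infty(0)=m$; uniqueness of the geodesic line between two ideal endpoints gives $g_\infty(\cdot)=g(\cdot+s)$ for some $s\in\mathbb{R}$, so $m=g(s)$. \emph{Step 3:} Pin down $s=0$. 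Using $d_X(x_n,m_n)=d_X(y_n,m_n)$,
$$d_X(x_n,o)-d_X(y_n,o)=[d_X(x_n,o)-d_X(x_n,m_n)]-[d_X(y_n,o)-d_X(y_n,m_n)],$$
and each bracket tends to $\beta^m_{g(+\infty)}(o)$ and $\beta^m_{g(-\infty)}(o)$ respectively since $m_n\to m$ and $x_n,y_n$ tend to the corresponding ideal points. Rewriting condition (III) via $\beta^o_\xi(y)=-\beta^y_\xi(o)$ and using the linearity $\beta^{g(s)}_{g(\pm\infty)}(o)=\beta^{g(0)}_{g(\pm\infty)}(o)\pm s$ of Busemann functions along $g$, equality of the two limits collapses to $2s=0$, hence $m=g(0)$. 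Since every convergent subsequence of the bounded sequence $\{m_n\}$ has limit $g(0)$, the full sequence converges to $g(0)$ and $\tilde\rho(x_n,y_n)\to\tilde\rho(g)$.

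The main obstacle is Step~1: the uniform bound on $\{m_n\}$. It is the only place where CAT$(-1)$ rather than merely CAT$(0)$ is genuinely used, via the geometry of perpendicular feet in a negatively curved space; once boundedness is in place, Steps~2 and~3 are a clean deployment of Arzelà--Ascoli (Lemma~\ref{geodesics}) together with the linearity of Busemann functions along geodesics.
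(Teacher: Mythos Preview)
Your proof is correct and follows the same three-step architecture as the paper: (1) show the midpoints $m_n$ stay in a compact set, (2) pass to a subsequential limit geodesic via Lemma~\ref{geodesics}, (3) identify the limit point as $g(0)$ using Busemann functions. Steps~2 and~3 are essentially identical to the paper's, up to the cosmetic choice of normalising Busemann functions at $m$ versus at $o$.

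The only substantive difference is in Step~1. The paper does not introduce the perpendicular foot $p_n$ or invoke comparison triangles; instead it argues by contradiction using the identity
\[
(x_n\,|\,m_n)_o \;=\; \tfrac{1}{2}\Bigl((x_n\,|\,y_n)_o + d(m_n,o) + \tfrac{1}{2}\bigl(d(x_n,o)-d(y_n,o)\bigr)\Bigr),
\]
so that if $d(m_n,o)\to\infty$ then $(x_n|m_n)_o\to\infty$, forcing $m_n\to\xi$, and symmetrically $m_n\to\xi'$, contradicting $\xi\neq\xi'$. This is shorter and purely in terms of Gromov products, whereas your route via the foot $p_n$ and ``hyperbolic trigonometry in the comparison triangle'' is correct but needs a couple of lines to justify (e.g.\ that in the $\mathbb{H}^2$ comparison triangle the signed position of $\bar m_n$ relative to $\bar p_n$ is $\tfrac{1}{2}(d(x_n,o)-d(y_n,o))$ up to a vanishing error, and then the CAT$(-1)$ inequality transfers the bound back to $X$). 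Either way the conclusion is the same; the paper's version just avoids the auxiliary point and the comparison step.
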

\begin{proof}
The restrictions of $\tilde\rho$  to $X\times X$ and to $G$ are continuous.  Let $\{(x_n, y_n)\}_n$ be a sequence in $X\times X$ that converges to a geodesic $g$ in $G$, so $x_n\rightarrow g(+\infty)$, $y_n\rightarrow g(-\infty)$  and $d(x_n,o)-d(y_n, o) \rightarrow C$. 
First of all, we will prove that the geodesic segments $x_ny_n$ converge to a parameterization of the geodesic line $L$ with ideal endpoints $\xi=g(+\infty)$ and $\xi'=g(-\infty)$ and then we will see that this parameterization is precisely $g$. 

For each pair $(x_n, y_n)$ let $m_n$ be the middle point of the segment $x_ny_n$. The points $m_n$ lie in a compact set. Indeed, suppose $d(o,m_n)\rightarrow +\infty$ so $m_n\rightarrow \eta\in \partial_{\infty}X$. Using the definition of Gromov product:
$$(x_n|m_n)_o = \frac{1}{2}((x_n|y_n)_o+ d(m_n,o) +\frac{1}{2}(d(x_n,o)-d(y_n,o)))$$
Since $\xi\neq \xi'$, by Lemma \ref{lema2} $\lim_n (x_n|y_n)_o$ is bounded. By hypothesis, $d(x_n,o)-d(y_n,o)$ is also bounded and $d(m_n,o)\rightarrow +\infty$. Therefore, $\lim_n (x_n|m_n)_o=+\infty$ and by Lemma \ref{lema2} again, $\eta=\xi$. Similarly, one could find that $\eta= \xi'$, so $\xi=\xi'$ and  arrive to a contradiction. Therefore $m_n\rightarrow m$ for some $m\in X$ and $m$ must be a point in $L$ by Lemma \ref{geodesics}.

Now we have, on one hand, 
$$d_X(m_n, y_n)-d_X(o, y_n) - (d_X(m_n, x_n) -d_X(o, x_n)) = d_X(o,x_n)- d_X(o, y_n) \rightarrow C.$$
On the other hand, since $d_X(\cdot, y_n)-d_X(o, y_n) \rightarrow \beta^o_{\xi'}(\cdot)$ and $d_X(\cdot, x_n)-d_X(o, x_n) \rightarrow \beta^o_{\xi}(\cdot)$ uniformly on compact sets, we have 
$$d_X(m_n, y_n)-d_X(o, y_n) - (d_X(m_n, x_n) -d_X(o, x_n)) \rightarrow \beta^o_{\xi'}(m)-\beta^o_{\xi}(m)$$ 
so
$$\beta^o_{\xi'}(m)-\beta^o_{\xi}(m)=C.$$
But the only point in $L$ that satisfies this equation is precisely $g(0)$. Hence $m=g(0)$.

Therefore, $\tilde{\rho}(x_n, y_n) = (m_n, m_n) \rightarrow (m, m)= (g(0), g(0)) = \tilde{\rho}(g)$ and the map $\tilde{\rho}$ is everywhere continuous. 
\end{proof}
\begin{corollary}
\label{boundary of a fibre}
The fibre $\tilde{\rho}^{-1}(x,x)$ restricted to $G$ is the set $G_x\subset G$ such that 
$$G_{x}= \{ \sigma \in G |\sigma(0)= x\}.$$
\end{corollary}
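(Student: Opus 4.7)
The statement is essentially a direct unwrapping of the definition of $\tilde{\rho}$ restricted to $G$, so the plan is very short. My proof would simply note that, by the definition given just before Lemma \ref{geodesics} (the bullet ``If $g\in G$, then $\tilde\rho(g)=(g(0),g(0))$''), a geodesic $\sigma\in G$ lies in $\tilde\rho^{-1}(x,x)$ if and only if $(\sigma(0),\sigma(0))=(x,x)$, which is equivalent to $\sigma(0)=x$. That is exactly the definition of $G_x$.

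The only thing worth remarking, and which I would include as a one-line justification for well-definedness, is that the preimage is taken in $X\times X\cup G$ but we are intersecting with $G$, so no interaction with the $X\times X$-part of $\tilde\rho$ arises; the continuity established in Proposition \ref{continuous retraction} is used elsewhere but is not needed here. There is no genuine obstacle: the statement is an immediate consequence of the defining formula of $\tilde\rho$ on $G$ together with the definition of $G_x$.
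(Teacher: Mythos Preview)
Your proposal is correct and matches the paper's approach: the paper gives no proof for this corollary, treating it as immediate from the defining formula $\tilde\rho(g)=(g(0),g(0))$ for $g\in G$, which is exactly what you unwrap. (A minor point: the definition of $\tilde\rho$ actually appears before Lemma~\ref{lema2}, not just before Lemma~\ref{geodesics}.)
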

\begin{example}
\label{unitary tangent}
In a Riemannian manifold $X$ of dimension $n$ and sectional curvature $\leq -1$, $G_x$ is identified with the unitary tangent at $x$, $(T_{x}X)^1 \cong S^{n-1}$.
\end{example}
Observe that since $X\times X \cup G$ and $X \times X \cup \Omega$ are homeomorphic we have also a continuous projection from $X\times X\cup \Omega$ to $\Delta$ that we also call $\tilde{\rho}$. 

Now, let $K\subset X$ be a compact in $X$ such that $K/\Gamma\cong X/\Gamma$ and consider the set 
$$K_{\Delta}=\{(x,x) \in \Delta \text{ such that } x \in K \}.$$
\begin{lemma}
\label{compact domain}
The set $\tilde{\rho}^{-1}(K_{\Delta})$ is compact.
\end{lemma}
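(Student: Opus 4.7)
I will show that $\tilde{\rho}^{-1}(K_\Delta)$ is sequentially compact. Take any sequence $\{p_n\}_n$ in $\tilde{\rho}^{-1}(K_\Delta)$. After passing to a subsequence we may assume that either (a) every $p_n = g_n$ lies in $G$, in which case $g_n(0)\in K$, or (b) every $p_n = (x_n,y_n)$ lies in $X\times X$, in which case by Lemma \ref{fibres} the midpoint $m_n$ of the segment $x_ny_n$ belongs to $K$.

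In case (a), compactness of $K$ gives a subsequence with $g_n(0)\to m\in K$; since each $g_n$ is $1$-Lipschitz, Arzela--Ascoli on the proper space $X$ (exactly as quoted in the proof of Lemma \ref{geodesics}) yields a further subsequence converging uniformly on compact sets to a geodesic $g\in G$ with $g(0)=m\in K$, hence $g\in \tilde{\rho}^{-1}(K_\Delta)$. In case (b), extract a subsequence with $m_n\to m\in K$, and split on whether $d_X(x_n,y_n)=2d_X(x_n,m_n)$ is bounded. If it is, then $\{x_n\}$ and $\{y_n\}$ lie in a compact ball around $m$, and a subsequence converges to some $(x,y)\in X\times X$ whose midpoint is $m\in K$, so $(x,y)\in\rho^{-1}(K_\Delta)$.

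The substantive case is when $d_X(x_n,m_n)\to\infty$. Then $d_X(x_n,o),d_X(y_n,o)\to\infty$, and by compactness of $\overline{X}$ we may assume $x_n\to\xi_+$ and $y_n\to\xi_-$ in $\partial_\infty X$. I would verify three things that match the three bullets defining convergence to a geodesic in $X\times X\cup G$. First, $\xi_+\neq\xi_-$: since $m_n$ lies on the segment $x_ny_n$ we have $(x_n|y_n)_{m_n}=0$, and the elementary estimate $|(x_n|y_n)_o-(x_n|y_n)_{m_n}|\leq d_X(o,m_n)$ together with $m_n\in K$ shows that $(x_n|y_n)_o$ stays bounded; by Lemma \ref{lema2} this forces $(\xi_+|\xi_-)_o<\infty$, so the two ideal points are distinct. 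Second, using $d_X(x_n,m_n)=d_X(y_n,m_n)$, write
\begin{equation*}
d_X(x_n,o)-d_X(y_n,o) = \bigl[d_X(x_n,o)-d_X(x_n,m_n)\bigr] - \bigl[d_X(y_n,o)-d_X(y_n,m_n)\bigr],
\end{equation*}
and invoke uniform convergence on compact sets of normalized distance functions to Busemann functions (together with $m_n\to m$) to deduce that the right-hand side tends to $\beta^o_{\xi_-}(m)-\beta^o_{\xi_+}(m)$. Third, applying Lemma \ref{geodesics} to the segments $x_ny_n$ reparametrized with $m_n$ at the origin produces, along a subsequence, a geodesic $g$ with $g(+\infty)=\xi_+$, $g(-\infty)=\xi_-$ and $g(0)=m\in K$. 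The three items say precisely that $(x_n,y_n)\to g$ in $X\times X\cup G$, and $g\in \tilde{\rho}^{-1}(K_\Delta)$.

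The main obstacle is this last case: one must simultaneously control the ideal limits of $x_n$ and $y_n$, confirm they are distinct, and identify the correct real parameter $\beta^o_{\xi_-}(m)-\beta^o_{\xi_+}(m)$. The key leverage for all three is that $m_n$ stays in the compact set $K$, which both bounds the Gromov product at $o$ and allows $m$ to play the role of a common basepoint on the limiting geodesic; once this is set up, the remaining manipulations are the same telescoping of distances already used in the proof of Proposition \ref{continuous retraction}.
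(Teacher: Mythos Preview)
Your argument is correct. The overall architecture matches the paper's: split into the $G$-case (Arzel\`a--Ascoli) and the $X\times X$-case, and in the latter use that the midpoints $m_n$ stay in $K$ together with the bound $(x_n|y_n)_o\le d_X(o,m_n)$ to rule out $\xi_+=\xi_-$.

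The difference is in how the unbounded subcase is finished. The paper does not construct the limiting geodesic: it passes to a subsequence converging in the full compactification $\overline{X\times X}^{\max}$, then shows the limit cannot be singular (bounded difference $d(x_n,o)-d(y_n,o)$) nor in the diagonal part (bounded Gromov product), hence lies in $X\times X\cup\Omega$; closedness of $\tilde\rho^{-1}(K_\Delta)$ (from Proposition~\ref{continuous retraction}) then finishes it. You instead avoid invoking compactness of $\overline{X\times X}^{\max}$ and build the limit $g$ explicitly via Lemma~\ref{geodesics} and the Busemann telescoping, in effect re-running the computation from Proposition~\ref{continuous retraction} inside this proof. Your route is more self-contained (it does not rely on the ambient horofunction compactification being compact); the paper's is shorter because the convergence verification has already been done in Proposition~\ref{continuous retraction}.
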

\begin{proof}

Consider a sequence $\{(x_n, y_n)\}_n$ in $\tilde{\rho}^{-1}(K_{\Delta})$. Then, since $K_{\Delta}$ is compact, $\{\tilde{\rho}((x_n, y_n))\}_n$ has a convergent subsequence $\{(m_n, m_n)\}_n$ in $K_{\Delta}$. Take a sequence of points in $\{(x_n, y_n)\}_n$ that project to this convergent subsequence.  They have a subsequence $\{(x'_n, y'_n)\}_n$ that converges in $\overline{X\times X}^{\max}$. If the limit point of this subsequence is in $X\times X \cup \Omega$ we are done. If not, either the limit is in $X\times X \cup \Delta$ or $|\{d(x'_n,o)-d(y'_n,o)\}_n|$ is unbounded for all $n$. For the first case, observe that for every $n$, $(x'_n|y'_n)_o\leq d_{X}(m_n, o)<C$, but $\{x'_n\}_n$ and $\{y'_n\}_n$ have the same limit if and only if $(x'_n|y'_n)_o\rightarrow \infty$ \cite[Part III, Sect. H.3]{BH}, hence, this case is not possible.  The second case is not possible either since $|\{d(x'_n,o)-d(y'_n,o)\}_n|$ unbounded implies $\{d(m_n, o)\}_n$ unbounded.

Now, consider a sequence of the form $\{g_n\}_n$ in $\tilde{\rho}^{-1}(K_{\Delta})$. The geodesics in the sequence satisfy $g_n(0)\in K$ for all $n$. Since the set of geodesics that go through a compact set is compact, $\{g_n\}$ has a convergent subsequence. 
\end{proof}
\begin{theorem}
\label{quotient compact}
The action of $\Gamma$ on $X\times X\cup\Omega$ is properly discontinuous and cocompact.
\end{theorem}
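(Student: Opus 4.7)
The plan is to deduce both properties from the corresponding properties of the cocompact, properly discontinuous action of $\Gamma$ on $X$ (identified with $\Delta$) by transferring them along the continuous retraction $\tilde\rho\colon X\times X\cup\Omega\to\Delta$ supplied by Proposition \ref{continuous retraction}. The essential geometric fact has already been established in Lemma \ref{compact domain}, namely that $\tilde\rho^{-1}(K_\Delta)$ is compact for the compact set $K_\Delta$ built from a fundamental compact $K\subset X$. The whole argument then reduces to observing that $\tilde\rho$ is $\Gamma$-equivariant and applying this equivariance twice: once to cover the whole space by $\Gamma$-translates of a compact set, and once to control which group elements send a compact subset back to itself.

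Equivariance of $\tilde\rho$ is immediate. On $X\times X$ it is inherited from $\rho$ because $\Gamma$ acts on $X\times X$ by $d_{\max}$-isometries, so nearest-point projections are preserved; on $G$ it holds because $\tilde\rho(\gamma g)=(\gamma g(0),\gamma g(0))=\gamma\tilde\rho(g)$ via the natural action of $\Gamma$ on parametrized geodesics noted before Lemma \ref{ideal bijection}. For cocompactness, pick any $p\in X\times X\cup\Omega$ and write $\tilde\rho(p)=(y,y)$ with $y\in X$. Since $\Gamma K=X$, there is $\gamma\in\Gamma$ with $\gamma^{-1}y\in K$, and equivariance gives $\tilde\rho(\gamma^{-1}p)=(\gamma^{-1}y,\gamma^{-1}y)\in K_\Delta$, i.e.\ $\gamma^{-1}p\in\tilde\rho^{-1}(K_\Delta)$. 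Hence $X\times X\cup\Omega=\Gamma\cdot\tilde\rho^{-1}(K_\Delta)$, and this exhibits the whole space as the $\Gamma$-orbit of a single compact set by Lemma \ref{compact domain}.

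For proper discontinuity, let $C\subset X\times X\cup\Omega$ be compact. Then $\tilde\rho(C)\subset\Delta\cong X$ is compact, so by proper discontinuity of $\Gamma$ on $X$ the set $F=\{\gamma\in\Gamma:\gamma\tilde\rho(C)\cap\tilde\rho(C)\neq\emptyset\}$ is finite. If $\gamma C\cap C\neq\emptyset$, equivariance forces $\gamma\tilde\rho(C)\cap\tilde\rho(C)\neq\emptyset$, so $\{\gamma\in\Gamma:\gamma C\cap C\neq\emptyset\}\subset F$ is finite as well.

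Since Lemma \ref{compact domain} and Proposition \ref{continuous retraction} have already absorbed all the genuinely geometric difficulties (convergence of midpoints of segments $x_ny_n$ toward $g(0)$, compactness of the space of geodesics through a compact set, behaviour of Busemann functions on $\Omega$), no substantive obstacle remains for this theorem: the argument is a formal push-forward along $\tilde\rho$. The only place one might pause is verifying equivariance on the $G$-component, but that is a one-line check as indicated above.
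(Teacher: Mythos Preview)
Your proof is correct and follows essentially the same approach as the paper: both arguments transfer proper discontinuity and cocompactness from $\Delta$ to $X\times X\cup\Omega$ along the $\Gamma$-equivariant retraction $\tilde\rho$, invoking Lemma~\ref{compact domain} for the compactness of $\tilde\rho^{-1}(K_\Delta)$. The only cosmetic difference is that the paper phrases cocompactness via the fibre description (Lemma~\ref{fibres} and Corollary~\ref{boundary of a fibre}) to identify $(X\times X\cup G)/\Gamma$ with $\tilde\rho^{-1}(K_\Delta)/\Gamma$, whereas you argue directly that every point can be translated into $\tilde\rho^{-1}(K_\Delta)$; these are equivalent, and your explicit verification of equivariance is a welcome addition.
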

\begin{proof}
In order to see that $\Gamma$ acts properly discontinuously, take $K\subset (X\times X\cup\Omega)$ any compact subset, and let $\gamma\in \Gamma$ be such that $\gamma K \cap K \neq \emptyset$. Then, $$\tilde{\rho}(K)\cap \tilde{\rho}(\gamma K) = \tilde{\rho}(K)\cap \gamma \tilde{\rho}(K)\neq \emptyset.$$ Since $\tilde{\rho}$ is continuous, $\tilde{\rho}(K)$ is compact, and since the action of $\Gamma$ on $K_{\Delta}$ is properly discontinuous,  $\tilde{\rho}(K)\cap \gamma \tilde{\rho}(K)\neq \emptyset$
only for a finite number of elements $\gamma\in\Gamma$. Therefore $\gamma K \cap K \neq \emptyset$ only for a finite number of $\gamma\in\Gamma$.

For the cocompactness, observe that by Lemma \ref{fibres} and Corollary \ref{boundary of a fibre} $(X\times X\cup \Omega) /\Gamma \cong (X\times X\cup G) /\Gamma = \tilde{\rho}^{-1}(K_{\Delta})/\Gamma$ which is compact by Lemma \ref{compact domain}.
\end{proof}

\footnotesize

\end{document}